%
\documentclass[12pt]{article}
\usepackage{amsmath, amsfonts, amsthm}
\usepackage{amssymb,mathrsfs, verbatim}
\usepackage{graphicx,amstext}
\usepackage[mathcal]{euscript}
%
%
\usepackage[latin1]{inputenc}
%
\hsize=126mm \vsize=180mm
\parindent=5mm
%

\newcommand{\ol}[1]{{\overline{#1}}}

%
\newcommand{\R}{\mathbb R}

\newcommand{\N}{\mathbb N}
\newcommand{\C}{\mathbb C}

%

\newcommand{\la}{\lambda}
\newcommand{\al}{\alpha}
\newcommand{\bt}{\beta}
\newcommand{\ve}{\varepsilon}
\newcommand{\vp}{\varphi}

\newcommand{\divx}{{\rm div}_{\mathbf{x}}}

\newcommand{\nablax}{{\nabla}_{\mathbf{x}}}

\newcommand{\deltax}{{\Delta}_{\mathbf{x}}}



\newtheorem{theorem}{Theorem}[section]
\newtheorem{proposition}[theorem]{Proposition}
\newtheorem{remark}[theorem]{Remark}
\newtheorem{lemma}[theorem]{Lemma}
\newtheorem{corollary}[theorem]{Corollary}
\newtheorem{definition}[theorem]{Definition}


\begin{document}
\title{The global solvability of initial-boundary value problem for nondiagonal
parabolic systems}

\author{Wladimir Neves$^1$, Mikhail Vishnevskii $^2$}

\date{}

\maketitle

\footnotetext[1]{ Instituto de Matem\'atica, Universidade Federal
do Rio de Janeiro, C.P. 68530, Cidade Universit\'aria 21945-970,
Rio de Janeiro, Brazil. E-mail: {\sl wladimir@im.ufrj.br.}}

\footnotetext[2]{Universidade Estadual do Norte Fluminense,
Alberto Lamego 2000, 28013-602, Rio de Janeiro, Brazil, and
Sobolev Institute of Mathematics, Koptuy prosp. 4630090,
Novosibirsky, Russia. E-mail: {\sl mikhail@uenf.br.}
%



\textit{Key words and phrases. Nondiagonal Parabolic systems,
Global in time solvability, Uniqueness and existence theorem,
Classical solutions.}}

\begin{abstract}
In this paper we study the quasilinear nondiagonal parabolic
type systems. We assume that the principal elliptic operator,
which is part of the parabolic system, has a divergence structure.
Under certain conditions it is proved the well-posedness of
classical solutions, which exist globally in time.
\end{abstract}

\maketitle

\section{Introduction}
\label{s.1}

The main purpose of this paper is to present some techniques and
results concerning global existence of classical solutions for
nondiagonal parabolic systems. To be precise, let
$(t,\mathbf{x})\in \mathbb{R}\times \mathbb{R}^{d}$, ($d \in \N$
fixed), be the points in the time-space domain. Throughout this
paper $\Omega \subset \R^d$ is an open bounded domain of class
$C^1$, $\mathbf{n}= (n^1,\ldots, n^d)$ is the unitary normal
vector field on $\partial \Omega =:\Gamma$.

\bigskip
For $T>0$ and $N \in \N$, we define $Q_T:= (0,T) \times \Omega$
and consider the vector function $\mathbf{u}: \ol{Q_T} \to \R^N$,
which is supposed to be governed by the following
reaction-diffusion system
\begin{equation}
\label{PS}
\begin{aligned}
   \partial_t u_\al(t,\mathbf{x}) +
   \divx \mathbf{f}_\al(\mathbf{x},\mathbf{u})=
   g_\al(\mathbf{x},\mathbf{u}),
   \quad (t,\mathbf{x}) \in Q_T,
\end{aligned}
\end{equation}
where $\mathbf{f}_\al$ is a given flux defined by
$$
  f^j_\al(\mathbf{x},\mathbf{u}):= \vp^j_\al(\mathbf{x},\mathbf{u})
  - A^{jk}_{\al\bt}(\mathbf{x},\mathbf{u}) \frac{\partial u_\bt}{\partial
  x_k}, \qquad (\al,\bt= 1,\ldots,N).
$$
Hereafter, the usual summation convention is used. Moreover, Greek
and Latin indices ranges respectively from 1 to $N$ and from 1 to
$d$. Although, we are not going to enter in physical details, we
should mention that there are many physical applications of the
above reaction-diffusion system, we list for instance: Flows in
porous media, diffusion of polymers, population dynamics, reaction
and diffusion in electrolysis, phase transitions, among others.

\medskip
We shall assume
\begin{eqnarray}
\label{REG1}
  A^{jk}_{\alpha\beta} \in C^2(\overline{\Omega} \times \R^N), \quad
  0< \lambda_0:= \inf
  \big\{A^{jk}_{\alpha\beta}(\mathbf{x},\mathbf{v)} \; \xi_\alpha^j \xi_\beta^k
  \big\},
\end{eqnarray}
where the infimum is taken over all $\xi \in S^{(Nd)-1}$,
($S^{(Nd)-1}$ denotes the unit sphere in $\R^{Nd}$), and
$(\mathbf{x},\mathbf{v}) \in \overline{\Omega}\times \R^N$. Also
\begin{eqnarray}
\label{REG2}
 \vp^j_\al \in C^2( \overline{\Omega} \times \R^N), \quad g_\al \in
 C^2( \overline{\Omega} \times \R^N).
\end{eqnarray}
The parabolic system \eqref{PS} is supplemented with an
initial-data
\begin{equation} \label{ID}
  \mathbf{u}(0,\mathbf{x})= \mathbf{u}_0(\mathbf{x}) \in C(\ol{\Omega}),
\end{equation}
and the following types of boundary-conditions on $\Gamma_T= (0,T)
\times \Gamma$: For some $0 \leq K \leq N$,  $K \in \N$ be fixed,
we set for $\mathbf{x}
\in \partial \Omega$
\begin{equation}
\label{BC}
\begin{aligned}
   \al= 1, \dots, K, \quad \big(f^j_\al \; n^j\big)(t,\mathbf{x})
   = {u_b}_\al(t,\mathbf{x}) \quad &\text{(Flux condition)},
   \\
   \al= K+1, \dots, N, \quad u_\al(t,\mathbf{x})
   = {u_b}_\al(t,\mathbf{x}) \quad &\text{(Dirichlet condition)},
\end{aligned}
\end{equation}
where ${u_b}_\al$ is a given function, and $\al= 1, \ldots, 0$ or
$\al= N+1, \ldots, N$, means clearly $\al\equiv 0$. The regularity
of $\mathbf{u}_b$ will be establish below, more precisely,
see Theorem \ref{LOCALEX} in Section \ref {GT} (general theory), 
Theorem  \ref{DIRICHLETTHM} 
in Section \ref{SECDIRICHLET}, 
for Dirichlet condition
of three-phase capillary-flow type systems.

\bigskip
The local existence of unique classical solution to the parabolic
system \eqref{PS}--\eqref{BC} might be proven either via fix point
arguments in H\"{o}lder space \cite{VSBTIZ}, or in weight
H\"{o}lder space \cite{TAAMPV1}, and also via semigroup theory in
$L^p$ space \cite{HA2}.
The important problem to answer is the question, whether this
local solution can be continued to be a global solution. It cannot
be expected that it is possible in all circumstances, as certain
counterexamples seen to indicate that solutions may start smoothly
and even remaind bounded, but develop a singularity after finite
time, see \cite{JSJO1,JSJO2}, and also \cite{TCPL}, \cite{ACJEZY}.
Although, in some papers, for instance in \cite{HA1}, \cite{AA},
\cite{VSB2}, \cite{HFVC} and \cite{MGGM} the global existence
result is proved under some structural conditions. This
information leads to the possibility to control some
lower-order norms  "a-priory".

\bigskip
In the first part of the paper, we introduce our strategy to study
global solutions to nondiagonal parabolic systems. We show in Section \ref{GT} that, 
classical solutions exist globally in time provided 
their orbits are pre-compact in the space of 
bounded and uniformly continuous functions.
First, we assume that $\mathbf{u}_0 \in E$, with $E \subset C(\ol{\Omega})$,
such that, in case of Dirichlet boundary condition, for each
$\mathbf{x} \in
\partial \Omega$, $u_{0\al}(\mathbf{x})= u_{b\al}(0,\mathbf{x})$,
$\al= K+1, \ldots,N$. In the papers \cite{TAAMPV1,TAAMPV2}, it was
proved the local existence and uniqueness of solution for
\eqref{PS}, \eqref{ID} and \eqref{BC}, with $\mathbf{u}_0 \in E$.
Also in the papers \cite{TAAMPV1,TAAMPV2}, it was proved the
continuous dependence of solutions of the initial data in $E$. Let
us write $\mathbf{u}(t,\mathbf{x};\mathbf{u}_0)$ the local
solution of the problem \eqref{PS}, \eqref{ID}, \eqref{BC} and
$[0,T_{\rm{max}})$ the maximal interval of the existence of
classical solution. If the set
$$
  D:= \{ \mathbf{u}(t,\mathbf{x});\, t \in (\ve,T_{\rm{max}})\},
$$
where $\ve < T_{\rm{max}}$ is any fixed positive number, is
pre-compact in $C(\ol{\Omega})$, that is to say, $\ol{D}$ is
compact in $C(\ol{\Omega})$, then the solution
$\mathbf{u}(t,\mathbf{x};\mathbf{u}_0)$ is global, i.e.
$T_{\rm{max}}= \infty$, which is proved in Theorem \ref{THMGES}.
We highlight that, this result is established in a general
context. Furthermore, if we have a priori estimate for
$\mathbf{u}(t,\mathbf{x};\mathbf{u}_0)$ in
$C^\gamma(\ol{\Omega})$, with $t \in (\ve,T_{\rm{max}})$, $\gamma
\in (0,1)$, then the local solution
$\mathbf{u}(t,\mathbf{x};\mathbf{u}_0)$ is global, see Corollary
\ref{CORGS} below.

\bigskip
Now, we recall an example from O. John and J. Star\'{a} \cite{JSJO2}.
For $d,N= 3$, $\kappa \in (0,4)$, the real analytic function
\begin{equation}
\label{SJS}
  \mathbf{u}(t,\mathbf{x};\mathbf{u}_0)= \frac{\mathbf{x}}{\sqrt{\kappa (1-t) + |\mathbf{x}|^2}}
\end{equation}
is, for each $t \in
[0,1)$, a classical solution of the system \eqref{PS}, with real analytic function $A^{jk}_{\alpha\beta}(\mathbf{u})$
(in a neighbourhood of $\ol{B(0,1)}$),
$\varphi_\alpha= g_\alpha= 0$, and respectively the initial and
boundary data
$$
  {\mathbf{u}_0}(\mathbf{x})= \frac{\mathbf{x}}{\sqrt{\kappa + |\mathbf{x}|^2}},
  \quad {\mathbf{u}_b}(t,\mathbf{x})= \frac{\mathbf{x}}{\sqrt{\kappa (1-t)+
  1}} \quad \text{if $|\mathbf{x}|= 1$}.
$$
This function \eqref{SJS} is bounded in $\ol{Q}_1$, but at time
$t=1$, i.e. $\mathbf{u}(1,\mathbf{x};\mathbf{u}_0)$ is not
continuous at $\mathbf{x}= 0$. Therefore, this example indicates
some sharpness for the result establish by Theorem \ref{THMGES}.
It is interesting to observe that, we have in this example from  O. John and
J. Stará \cite{JSJO2}, $\lambda_0 \simeq 0.04 \;
\lambda_1$, with $\lambda_1$ defined as
$$
  \lambda_1:= \sup \big\{ A^{jk}_{\alpha\beta}(\mathbf{x},\mathbf{v)} \; \xi_\alpha^j
  \xi_\beta^k \big\},
$$
where the supremum is taken over all $\xi \in S^{(Nd)-1}$, and
$(\mathbf{x},\mathbf{v}) \in \overline{\Omega}\times \R^N$.

\medskip
On the other hand, if $\lambda_0 \geq 0.33 \; \lambda_1$, $(d= 3)$, usually
called Cordes type conditions, then E. Kalita \cite{EK} proved that the
solution $\mathbf{u}(t)$ is bounded in $C^\gamma(\ol{\Omega})$.
Therefore, applying Corollary \ref{CORGS} we
obtain that the solution is global. This could be seen as a first
application of our strategy to answer the question whenever the
local solution can be continued to be a global solution. For
instance, this strategy solves the usual examples that come from
physics, where the matrix $A$ is a perturbation of the
identity, that is, $A= \lambda \, I_d + \mu B$, where
$\lambda>0$ is arbitrary and 
$\mu$ is a sufficiently small parameter.

\bigskip
In the second part of this paper, i.e. Section \ref{Three-phase capillary},
we apply our strategy to show
classical global solutions to nondiagonal parabolic systems, when
the matrix $A^{jk}_{\al\bt}$ is triangular (w.r.t. Greek indexes),
this is the major structural assumption. The motivation to study
such problems comes from three-phase capillary flow in porous
media. Therefore, we consider the parabolic system \eqref{PS}, with $N= 2$ and $d \geq 1$, that is 
\begin{equation} \label{PARABOLICCF}
   \begin{aligned}
   \partial_t u_1 +
   \divx \varphi_1(\mathbf{x},\mathbf{u})&= \divx \big(
   A_{1\bt}(\mathbf{x},\mathbf{u}) \nabla u_\bt \big)
   + g_1(\mathbf{x},\mathbf{u}),
   \\[5pt]
   \partial_t u_2 +
   \divx \varphi_2(\mathbf{x},\mathbf{u})&= \divx \big(
   A_{22}(\mathbf{x},\mathbf{u}) \nabla u_2 \big)
   + g_2(\mathbf{x},\mathbf{u}),
   \end{aligned}
\end{equation}
where $A^{jk}_{21}(\mathbf{x},\mathbf{u})= 0$ (major structural assumption).
Others conditions have to be considered, for instance see \eqref{FC}-\eqref{FCB}, which are used 
to establish the positively invariant regions (maximum principle), but we stress the 
following:  
\begin{equation}\label{COND1}
   \partial A^{jk}_{22} / \partial u_1 \equiv 0.
\end{equation}

Applying a different technic focused in a priori  estimates in H\"{o}lder spaces
and the Leray-Schauder's fixed-point theorem, H. Frid and V. Shelukhin \cite{HFVC} studied 
the homogeneous case $( f_\al=  f_\al(\mathbf{u}) )$ of parabolic system \eqref{PARABOLICCF}
in one dimension $(d= 1)$, with $g_\al \equiv 0$. In that paper, under the main condition $A_{21}(\mathbf{u}) \equiv 0$,
they proved existence and uniqueness of classical solution, see Theorem 1.1 (flux type condition), and 
existence of a classical solution with boundary condition assumed in the $L^2$-sense, see
Theorem 1.2 (Dirichlet condition).  Albeit, they have not considered the condition \eqref{COND1}, it seems to us that
this condition has not been avoided.

\bigskip
 Later S. Berres, R. Bürger and H. Frid
\cite{SBRBHF} to a similar (now $N \times N$) parabolic system cited before (that is, in \cite{HFVC}), they showed existence and uniqueness of classical solution, see 
Theorem 1.1 (perturbed flux condition), and
existence of a classical solution with boundary condition assumed in the $L^2$-sense, see
Theorem 1.2 (flux condition). Moreover, H. Amann \cite{HA1}
showded that, it
is sufficient to have an $L^\infty$ a priori bound with respect to
$\mathbf{x}$-variable and uniform Hölder continuity in time, with
$\gamma > d/(d+1)$,  to guarantee global existence.
We should mention that, the second part of
our work is neither contained in \cite{HA1} nor
\cite{SBRBHF,HFVC}.

\section{General theory} \label{GT}

\subsection{Local well-posedness} \label{SECLET}

Let us assume that the initial data of problem \eqref{PS},
\eqref{ID}, \eqref{BC} belongs to the space $E$, which is
constituted by vector functions
$$\mathbf{u}_0(\mathbf{x})=(u_{0_1}(\mathbf{x}),\ldots,u_{0_N}(\mathbf{x})),$$
such that ${u_0}_\al(\mathbf{x})$ belongs to
$C(\overline{\Omega})$. In case of Dirichlet boundary condition,
we assume also the agreement condition that $u_{0\al}(\mathbf{x})=
u_{b\al}(0,\mathbf{x})$, for $\mathbf{x} \in
\partial \Omega$, and $\al= K+1, \ldots,N$ holds for
$u_\al(t,\mathbf{x})$. Therefore, we impose no additional
assumption in the case of flux (boundary) condition \eqref{BC} for
$u_\al(t,\mathbf{x})$, except the containment of
$u_{0\al}(\mathbf{x})$ in space $C(\overline{\Omega})$. The local
existence theorem of problem \eqref{PS}, \eqref{ID}, \eqref{BC}
with initial data in $E$ is proved in \cite{TAAMPV1}, see also
\cite{TAAMPV2}.

\bigskip
In order to prove a local existence theorem with initial data in
$E$, we need to use the estimates of linear parabolic systems in
weighted H\"{o}lder classes, obtained by V. Belonosov and T.
Zelenjak in one dimensional case \cite{VSBTIZ}, and simultaneously
by V. Belonosov \cite{VSB1,VSB2}, also V. Solonnikov and A.
Khachatryan \cite{VASAGK} for parabolic system in several space
variables. Let us present these classes as they are related to the
case under discussion. Let $f(t,\mathbf{x})$ be a real function
defined in $Q_T$. Denote
$$
  \triangle_y^x f= f(t,\mathbf{x}) - f(t,\mathbf{y}),
  \quad
  \triangle^t_\tau f= f(t,\mathbf{x}) - f(\tau,\mathbf{x}),
$$
and suppose $s\geq 0,\quad r\leq s$.  Given a function
$u(t,\mathbf{x})$ which is defined and continuous in $Q'_T:= (0,T]
\times \overline{\Omega}$ together with its derivatives $D^\mu_t
D^\nu_{\mathbf{x}} u $ of order $2 \mu + |\nu| \leq s$, $\nu=
(\nu_1,\ldots,\nu_N)$, where $\nu_\alpha$, $(\alpha = 1,\ldots,N)$
are nonnegative integers, with $\mid\nu\mid= \Sigma_{\alpha=1}^N
\nu_\alpha$. Then, we introduce the following semi-norms
$$
  [u]_{r,s;\mathbf{x}}^{Q_T}= \sup \Big\{
  t^{\frac{s-r}{2}}
  \frac{\big| \triangle^x_y D^\mu_t D^\nu_{\mathbf{x}} u \big|}
  {|x-y|^{s-[s\,]}} \Big\},
$$
and
$$
  [u]_{r,s;t}^{Q_T}= \sup \Big\{ \theta^{\frac{s-r}{2}} \frac{\big| \triangle^t_\tau
  D^\mu_t D^\nu_{\mathbf{x}} u  \big|}{|t-\tau|^{\frac{(s-2\mu-\mid \nu
  \mid)}{2}}} \Big\},
$$
where $\theta= \min\{\tau,t\}$. The supremum in the first
semi-norm is taken over all points $(t,\mathbf{x}) \neq
(t,\mathbf{y})$ of $Q'_T$, and all $\mu$, $\nu$ such that $2 \mu +
|\nu|= [s]$. The second supremum is taken over all points
$(t,\mathbf{x}) \neq (\tau,\mathbf{x})$ of $Q'_T$, and $\mu$,
$\nu$ such that $0< s-2 \mu- |\nu|< 2$. Here, $[s\,]$ denotes the
largest integer which is smaller or equal to $s$.

\medskip
In addition, for $k\geq 0$ be an integer and each $r \leq k$, we
set
$$
  |u|^{Q_T}_{r,k}= \sup \big\{t^{\frac{(k-r)}{2}} \; |D^\mu_t
  D^\nu_{\mathbf{x}} u| \big\},
$$
where the supremum is taken over all $(t,\mathbf{x})\in Q'_T$ and
all values of $\mu$, $\nu$ such that, $0<s-2\mu - |\nu|= k$.
Finally, we define
$$
  [u]^{Q_T}_{r,s}:=[ u ]^{Q_T}_{r,s;\mathbf{x}}+[u]^{Q_T}_{r,s;t}.
$$
We observe that, the symbols $[u]^{Q_T}_s$ and $|u|^{Q_T}_k$ will
denote respectively the seminorms resulting from $[u]^{Q_T}_{r,s}$
and $|u|^{Q_T}_{r,k}$, when $r=s$ and $r=k$.

\bigskip
Let $H^s(Q_T)$ be the space of functions $u(t,\mathbf{x})$ having
continuous partial derivatives $D^\mu_tD^\nu_{\mathbf{x}} u$ of
order $2\mu+ |\nu| \leq s$ in $Q_T$, and the finite norm
$$
  \|u \|^{Q_T}_s= [u]^{Q_T}_s + \sum_{0\leq k<s} |u|^{Q_T}_k.
$$
This space is also usually denoted by $ H^{s,s/2}_{\mathbf{x},t}$
or $C^{s,s/2}_{\mathbf{x},t}$, but it is more convenient for us to
use the abridged notation $H^s$. We note that, if $u$ does not
depend on $t$, then the quantity $\|u\|^{Q_T}_s$ is converted into
the norm $\|u\|^{\Omega}_s$ in the standard H\"{o}lder space
$C^s(\Omega)$.

\medskip
Now suppose $s \geq 0$ and $r \leq s$. When $r$ is not integer,
the space $H^s_r(Q_T)$ is the set of functions $u(t,\mathbf{x})$
having continuous derivatives $D^{\mu}_t D^{\nu}_{\mathbf{x}} u$
of order $2\mu+ |\nu| \leq s$ in $Q'_T$, and the finite norms
$$
\begin{aligned}
  \|u \|^{Q_T}_{r,s}&=[u]^{Q_T}_{r,s} +
  \sum_{r<k<s} |u|^{Q_T}_{r,s} + \| u \|^{Q_T}_r,
  \\[5pt]
  \|u \|^{Q_T}_{r,s}&=[u]^{Q_T}_{r,s} + \sum_{0 \leq
  k<s} |u|^{Q_T}_{r,s},
\end{aligned}
$$
respectively when $r \geq 0$, and $r<0$. Moreover, when $r$ is an
integer the space $H^s_r(Q_T)$ is defined as the completion of
$H^s(Q_T)$ with respect to the above norm.

\medskip
Finally, the space $H^s_r(\Gamma_T)$ of functions defined on the
lateral surface of the cylinder $Q_T$ is defined as the set of
traces on $\Gamma_T$ of functions in $H^s_r(Q_T)$. The norm in
this space is defined by the following quantity
$$
  \| \varphi \|^{\Gamma_T}_{r,s}=
  \inf \| \Phi \|^{Q_T}_{r,s},
$$
where the infimum is taken over all $\Phi \in H^s_r(Q_T)$
coinciding with $\varphi$ on $\Gamma_T$.

\bigskip
Our first result is based on two theorems which are proven in
\cite{TAAMPV1} (see also \cite{TAAMPV2}): 

\medskip
\begin{theorem}$($Local existence with initial data from E$)$
\label{LOCALEX} Suppose that the problem \eqref{PS}, \eqref{ID}
and \eqref{BC} satisfies assumptions \eqref{REG1}, \eqref{REG2},
with $\mathbf{u}_0\in E$ and ${u_b}_\al \in
H_0^{\gamma(\alpha)}(\Gamma_T)$,
$$
  \gamma(\alpha)= \left \{
  \begin{aligned}
  1+\gamma, &\quad \alpha= 1, \ldots, K,
  \\
  2+\gamma, &\quad \alpha= K+1, \ldots, N,
  \end{aligned}
\right.
$$
with $0<\gamma <1$. Then, there exists a positive number $T$ such
that, the problem \eqref{PS}, \eqref{ID} and \eqref{BC} has a
unique classical solution
$$\mathbf{u}(t,\mathbf{x};\mathbf{u}_0) \in
H_0^{2+\gamma}(\ol{Q}_T).$$ The positive constant $T$ depends on
the value $\| \mathbf{u}_0 \|^{\Omega}_0$ and the modulus of
continuity of the function $\mathbf{u}_0(x)$.
\end{theorem}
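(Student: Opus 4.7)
The plan is to recast the problem \eqref{PS}, \eqref{ID}, \eqref{BC} as a fixed-point equation in the weighted H\"older space $H^{2+\gamma}_0(\ol{Q}_T)$, and to invoke the linear parabolic theory of Belonosov--Zelenjak and Solonnikov--Khachatryan cited above. The weighting by powers of $t$ is decisive here: because $\mathbf{u}_0$ is only continuous, higher derivatives of the solution may blow up as $t \downarrow 0$, and the factors $t^{(s-r)/2}$ in the seminorms absorb this blow-up while still enforcing continuous attainment of the initial data.

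First I would choose a ``background'' field $\mathbf{U}(t,\mathbf{x}) \in H^{2+\gamma}_0(\ol{Q}_T)$ matching \eqref{ID} and \eqref{BC} in the appropriate sense; for Dirichlet components this is possible thanks to the standing compatibility $u_{0\al}|_\Gamma = u_{b\al}(0,\cdot)$, $\al = K+1,\ldots,N$. Such an $\mathbf{U}$ can be built by solving auxiliary linear heat problems carrying the prescribed data, using the trace characterisation of $H^{\gamma(\al)}_0(\Gamma_T)$. Writing $\mathbf{u} = \mathbf{U} + \mathbf{w}$ reduces the task to finding $\mathbf{w}$ with zero initial data and homogeneous boundary data. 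For $\mathbf{v}$ ranging in a closed ball $\clg{B}_R \subset H^{2+\gamma}_0(\ol{Q}_T)$ centred at zero, I would then define $\mathbf{w}=\Phi(\mathbf{v})$ as the solution of the linear system obtained by freezing the nonlinear coefficients at $\mathbf{U}+\mathbf{v}$:
\begin{equation*}
\partial_t w_\al - \partial_{x_j}\bigl(A^{jk}_{\al\bt}(\mathbf{x},\mathbf{U}+\mathbf{v})\,\partial_{x_k} w_\bt\bigr) = F_\al\bigl(\mathbf{x},\mathbf{U}+\mathbf{v},\nablax(\mathbf{U}+\mathbf{v})\bigr),
\end{equation*}
supplemented with zero initial data and homogeneous boundary conditions of the same flux/Dirichlet type as \eqref{BC}. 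The uniform ellipticity \eqref{REG1} and the $C^2$-regularity \eqref{REG2} guarantee that the frozen coefficients lie in the H\"older class required by the linear theory, so the weighted Schauder estimates furnish $\Phi(\mathbf{v}) \in H^{2+\gamma}_0(\ol{Q}_T)$ together with a bound of the form $\|\mathbf{w}\|^{Q_T}_{0,2+\gamma} \leq C(R)\bigl(\|\mathbf{U}\|^{Q_T}_{0,2+\gamma} + T^\delta R\bigr)$ for some $\delta>0$.

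Exploiting this small factor $T^\delta$, one shows that for $T$ sufficiently small -- depending on $\|\mathbf{u}_0\|^{\Omega}_0$, the modulus of continuity of $\mathbf{u}_0$, and the norms of the boundary data -- the map $\Phi$ leaves $\clg{B}_R$ invariant and is a strict contraction there, and its unique fixed point is the desired classical solution in $H^{2+\gamma}_0(\ol{Q}_T)$. Uniqueness on $[0,T]$ then follows by applying the same linear estimates to the difference of two putative solutions combined with a standard Gronwall step. The main obstacle will be the composition estimates: one must show that the Nemytski operators $\mathbf{v}\mapsto A^{jk}_{\al\bt}(\cdot,\mathbf{U}+\mathbf{v})$ and $\mathbf{v}\mapsto F_\al(\cdot,\mathbf{U}+\mathbf{v},\nablax(\mathbf{U}+\mathbf{v}))$ act continuously in the weighted H\"older norm and, crucially, produce the small prefactor $T^\delta$ needed to close the contraction. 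This is precisely where the dependence of $T$ on the modulus of continuity of $\mathbf{u}_0$ (rather than merely on $\|\mathbf{u}_0\|^{\Omega}_0$) enters: controlling the oscillation of $\mathbf{u}_0$ is what allows the freezing procedure to yield coefficients uniformly close to constant ones over the short time interval $[0,T]$, and thereby to bring in the full strength of the linear weighted Schauder theory.
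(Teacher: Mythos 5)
The paper does not actually prove Theorem \ref{LOCALEX}: it cites the weighted-H\"{o}lder fixed-point construction of Akramov--Vishnevskii \cite{TAAMPV1,TAAMPV2} (built on the linear estimates of Belonosov--Zelenjak \cite{VSBTIZ} and Solonnikov--Khachatryan \cite{VASAGK}) and remarks only that slight modifications of Theorems 1 and 2 there are needed. Your proposal is a faithful sketch of exactly that route --- background field matching the data, frozen-coefficient linear problems, weighted Schauder estimates, and a contraction on a short time interval whose smallness is governed by $\| \mathbf{u}_0 \|^{\Omega}_0$ and the modulus of continuity of $\mathbf{u}_0$ --- so it coincides with the intended proof.
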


\begin{theorem} $($The continuous dependence of solutions$)$
\label{CDSID}
Suppose that all conditions of Theorem \ref{LOCALEX} are
satisfied, and $\mathbf{u}(t,\mathbf{x};\mathbf{u}_0)$ is the classical 
solution to problem \eqref{PS}--\eqref{BC} in $Q_T$ given by
Theorem \ref{LOCALEX}. Then, there exists a positive number
$\delta>0$, such that, if $\mathbf{v}_0 \in E$, ${v_b}_\al \in
H_0^{\gamma(\al)}$, satisfy
$$
  \| \mathbf{u}_0-\mathbf{v}_0
\|^{\Omega}_0 + \|{u_b}_\al -
{v_b}_\al\|_{0,\gamma(\al)}^{\Gamma_T} < \delta
$$
and also the compatibility conditions, then there exists a
classical solution $\mathbf{v}(t,\mathbf{x};\mathbf{v}_0)$ to
problem \eqref{PS}--\eqref{BC} in $Q_T$, with the initial and
boundary data respectively $\mathbf{v}_0$ and $\mathbf{v}_b$.
Moreover, it follows that
\begin{eqnarray}
\|
\mathbf{u}(t,\mathbf{x};\mathbf{u}_0)-\mathbf{v}(t,\mathbf{x};\mathbf{v}_0)
\|^{Q_T}_{0,2+\gamma}\leq C \; \| \mathbf{u}_0-\mathbf{v}_0
\|^{\Omega}_0 + \|{u_b}_\al -
{v_b}_\al\|_{0,\gamma(\al)}^{\Gamma_T},
\end{eqnarray}
where the constant $C$ does not depend on $\mathbf{v}_0$ and
$\mathbf{v}_b$.
\end{theorem}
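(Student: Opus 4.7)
The plan is to combine the local existence theorem with a linear perturbation argument that bootstraps the lifespan of $\mathbf{v}$ all the way up to $T$. First, I would apply Theorem \ref{LOCALEX} to $\mathbf{v}_0$: since the guaranteed lifespan depends only on $\|\mathbf{v}_0\|^{\Omega}_0$ and the modulus of continuity of $\mathbf{v}_0$, both of which are uniformly close to the corresponding quantities for $\mathbf{u}_0$ once $\delta$ is small, a classical solution $\mathbf{v}(t,\mathbf{x};\mathbf{v}_0)\in H^{2+\gamma}_0(\ol{Q}_\tau)$ exists on some interval of fixed length $\tau>0$ independent of $\mathbf{v}_0$ in the $\delta$-ball; its $H^{2+\gamma}_0(\ol{Q}_\tau)$-norm is also uniformly bounded.

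Next, set $\mathbf{w}:=\mathbf{u}-\mathbf{v}$. Subtracting the two parabolic systems and expanding $\vp^j_\al(\mathbf{x},\mathbf{u})-\vp^j_\al(\mathbf{x},\mathbf{v})$, $A^{jk}_{\al\bt}(\mathbf{x},\mathbf{u})-A^{jk}_{\al\bt}(\mathbf{x},\mathbf{v})$ and $g_\al(\mathbf{x},\mathbf{u})-g_\al(\mathbf{x},\mathbf{v})$ via the integral mean value theorem along the segment between $\mathbf{u}$ and $\mathbf{v}$, one finds that $\mathbf{w}$ solves a linear parabolic system in divergence form with principal matrix $A^{jk}_{\al\bt}(\mathbf{x},\mathbf{u})$ and lower-order coefficients that are smooth functions of $(\mathbf{x},\mathbf{u},\mathbf{v})$ multiplied, in the divergence part, by $\nablax\mathbf{v}$. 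The boundary data on $\Gamma_T$ become $w_\al={u_b}_\al-{v_b}_\al$ for $\al=K+1,\dots,N$ and a linear flux condition with right-hand side ${u_b}_\al-{v_b}_\al$ for $\al=1,\dots,K$; compatibility at $t=0$ holds by hypothesis.

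Applying the Schauder-type estimate in the weighted H\"{o}lder class $H^{2+\gamma}_0$ of Belonosov-Zelenjak \cite{VSBTIZ} and Solonnikov-Khachatryan \cite{VASAGK} to this linearized problem then yields
$$
  \| \mathbf{w} \|^{Q_\tau}_{0,2+\gamma}\leq C\Big(\|\mathbf{u}_0-\mathbf{v}_0\|^{\Omega}_0+\sum_\al\|{u_b}_\al-{v_b}_\al\|^{\Gamma_\tau}_{0,\gamma(\al)}\Big),
$$
where $C$ depends only on the $H^{2+\gamma}_0$-bounds of $\mathbf{u}$ and $\mathbf{v}$, hence is uniform for $\delta$ sufficiently small. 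To extend from $[0,\tau]$ to $[0,T]$, I would then run a standard continuation argument: setting $T^*:=\sup\{s\in[0,T]:\mathbf{v}\text{ exists on }\ol{Q}_s\text{ and }\|\mathbf{u}-\mathbf{v}\|^{Q_s}_{0,2+\gamma}\leq 1\}$, the above estimate (iterated on subintervals of length $\tau$) keeps $\mathbf{v}(t)$ close to $\mathbf{u}(t)$ in $C(\ol{\Omega})$ with a uniform modulus of continuity, so Theorem \ref{LOCALEX} can be re-applied at $T^*$ to extend $\mathbf{v}$ beyond $T^*$, contradicting maximality provided $\delta$ was chosen small enough that $C\delta<1$.

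The main obstacle is tracking the $t$-weights precisely when applying the linear Schauder estimate: because the coefficients of the linearized system contain $\nablax\mathbf{v}$, which a priori only lies in $H^{1+\gamma}_1$ (the weight in $t$ reflects that $\mathbf{u}_0,\mathbf{v}_0$ are merely continuous), the Schauder theory has to be invoked in the weighted H\"{o}lder framework rather than the classical one, and one must check that the product of a coefficient in $H^{\gamma}_1$ with $\mathbf{w}\in H^{2+\gamma}_0$ lands in a space in which the estimate closes. This is exactly the reason for working in the $H^{s}_r$ scale of Belonosov-Zelenjak/Solonnikov-Khachatryan, where the algebraic rules on the indices $(r,s)$ make these multiplications compatible with the Schauder estimate.
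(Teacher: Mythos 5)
The paper does not actually prove this theorem: immediately after the statement the authors remark that ``the proofs of the above theorems are slight modifications to that ones, respectively given for Theorem 1 and Theorem 2 in \cite{TAAMPV1}'', so the result is imported from Akramov--Vishnevskii rather than argued here. Your outline (local existence for $\mathbf{v}_0$, linearize the difference $\mathbf{w}=\mathbf{u}-\mathbf{v}$, apply the weighted Schauder estimates of \cite{VSBTIZ,VASAGK}, continue in time) is the standard route one would expect that reference to follow, and the overall architecture is reasonable. There is therefore no ``paper proof'' to compare against step by step; what I can do is point out where your sketch has genuine gaps as a self-contained argument.

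The first gap is in your opening step. You claim a lifespan $\tau>0$ uniform over the $\delta$-ball because the modulus of continuity of $\mathbf{v}_0$ is ``uniformly close'' to that of $\mathbf{u}_0$. Sup-norm closeness does \emph{not} control the modulus of continuity: it only gives $\omega_{\mathbf{v}_0}(h)\le \omega_{\mathbf{u}_0}(h)+2\delta$, and the right-hand side does not tend to zero as $h\to 0$. Whether this additive-$2\delta$ control suffices for a uniform lifespan depends on exactly how Theorem \ref{LOCALEX} uses the modulus of continuity (typically one needs $\omega(h_0)<\varepsilon_0$ for some threshold tied to $\lambda_0$, in which case $\delta<\varepsilon_0/4$ would rescue the claim), but as written the assertion is unjustified. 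A cleaner alternative, which avoids the issue entirely, is to set up the fixed-point problem for the perturbation $\mathbf{w}$ directly on the whole cylinder $Q_T$ around the known solution $\mathbf{u}$, using $\delta$ rather than the length of the time interval as the smallness parameter; this produces existence of $\mathbf{v}$ on all of $[0,T]$ and the estimate simultaneously, with no continuation step. The second gap is the one you yourself flag as ``the main obstacle'': the linearized coefficients involve $\nabla_{\mathbf{x}}\mathbf{v}$ and, after differentiating the divergence, $D^2_{\mathbf{x}}\mathbf{v}$, which in the $H^s_r$ scale carry weights blowing up like $t^{-1/2}$ and $t^{-1}$ near $t=0$; verifying that the products with $\mathbf{w}\in H^{2+\gamma}_0$ land in a space where the Schauder estimate closes is precisely the technical content of the proof, and you state the difficulty without resolving it. Given that the paper itself delegates this to \cite{TAAMPV1}, this is understandable, but it means your proposal is an outline of the strategy rather than a proof.
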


We remark that, the proofs of the above theorems are slight 
modifications to that ones, respectively given for Theorem 1
and Theorem 2  in \cite{TAAMPV1}.

\medskip
\begin{remark}
1. First, the example recalled from \cite{JSJO2} shows that, we may
not neglect the dependence of $T>0$ on the modulus of continuity
of $\mathbf{u}_0$.

\medskip
2. We can consider the problem \eqref{PS},  \eqref{BC} and the
initial condition $\mathbf{u}_0(\mathbf{x})=
\mathbf{u}(\tau,\mathbf{x})$, with ${u_0}_\al(\mathbf{x})=
{u_b}_\al(\tau,\mathbf{x})$ for $\al= K+1,\ldots,N$, and each $\mathbf{x} \in \partial \Omega$, such that, the existence time
interval does not depend on  $\tau \in [0,T]$.
\end{remark}

\subsection{Global solutions}\label{GlobalSolutions}

To prove the global solvability of the problem \eqref{PS},
\eqref{ID} and \eqref{BC} in the space $H_0^{2+\gamma}(\ol{Q}_T)$,
we use the result from Section \ref{SECLET} related to time local
classical solvability of problem problem \eqref{PS}, \eqref{ID}
and \eqref{BC} in $H_0^{2+\gamma}(\ol{Q}_T)$. Denote by
$[0,T_{\rm{max}})$ the maximal existence interval for the smooth
solution $\mathbf{u}(t,\mathbf{x})$. Then, for all $t \in (\ve,T_{\rm{max}})$, we have
$\mathbf{u}(t) \in C^{2+\gamma}(\ol{\Omega})$, and $|\mathbf{u}|
\leq M$.

\begin{theorem}\label{THMGES}
If $\ol{D}$ is compact in $C(\ol{\Omega})$, then $T_{\rm{max}}=
\infty$ or equivalently, for any $T>0$ the problem \eqref{PS},
\eqref{ID} and \eqref{BC} has global solution in $Q_T$.
\end{theorem}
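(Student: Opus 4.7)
The plan is to argue by contradiction: assume $T_{\rm{max}}<\infty$ and use the local existence theorem to re-start the solution at a time just before $T_{\rm{max}}$, producing an extension that contradicts maximality.

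First I would exploit pre-compactness via Arzel\`a--Ascoli. Since $\ol{D}$ is compact in $C(\ol{\Omega})$, the family $\{\mathbf{u}(t,\cdot):t\in(\ve,T_{\rm{max}})\}$ is uniformly bounded in the sup-norm by some constant $M$ and admits a common spatial modulus of continuity $\omega(r)\to 0$ as $r\to 0^+$. In particular, for every $\tau\in(\ve,T_{\rm{max}})$, the function $\mathbf{u}(\tau,\cdot)$ belongs to $E$, satisfies $\|\mathbf{u}(\tau,\cdot)\|^{\Omega}_0\leq M$, and carries the same modulus of continuity $\omega$. Moreover the Dirichlet compatibility $u_\al(\tau,\mathbf{x})=u_{b\al}(\tau,\mathbf{x})$ for $\al=K+1,\ldots,N$ and $\mathbf{x}\in\p\Omega$ is automatic, since $\mathbf{u}$ is already a classical solution of \eqref{PS}--\eqref{BC} on $[0,T_{\rm{max}})$.

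Next I would invoke Theorem \ref{LOCALEX}, whose crucial feature is that the existence time $T$ depends only on $\|\mathbf{u}_0\|^{\Omega}_0$ and on the modulus of continuity of $\mathbf{u}_0$. Applying this with initial time $\tau$ and initial datum $\mathbf{u}(\tau,\cdot)$ (using the second item of the remark following Theorem \ref{CDSID} to shift the time origin), I obtain a uniform $\tau_0>0$, independent of $\tau\in(\ve,T_{\rm{max}})$, such that a classical solution in $H_0^{2+\gamma}$ issued from $\mathbf{u}(\tau,\cdot)$ exists on $[\tau,\tau+\tau_0]$. Pick now $\tau^*\in(\ve,T_{\rm{max}})$ with $T_{\rm{max}}-\tau^*<\tau_0$; by the uniqueness part of Theorem \ref{LOCALEX}, the new solution agrees with $\mathbf{u}$ on $[\tau^*,T_{\rm{max}})$ and extends it continuously up to $\tau^*+\tau_0>T_{\rm{max}}$, contradicting the definition of $T_{\rm{max}}$.

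The main potential obstacle is precisely the point that the local time $T$ in Theorem \ref{LOCALEX} is controlled purely by the sup-norm and the modulus of continuity of the initial datum, rather than by some stronger quantity (for instance a weighted H\"older seminorm) that could degenerate as $\tau\uparrow T_{\rm{max}}$. This is exactly the content of Theorem \ref{LOCALEX} and is what makes the compactness hypothesis on $\ol{D}$ the natural continuation criterion; once it is in hand, the rest is a standard extension-by-uniqueness argument.
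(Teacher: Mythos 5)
Your proof is correct, but it takes a genuinely different route from the paper's. You read the compactness of $\ol{D}$ through Arzel\`a--Ascoli as a uniform sup-bound plus a common modulus of continuity for the whole family $\{\mathbf{u}(\tau,\cdot)\}$, and then lean on the explicit statement in Theorem \ref{LOCALEX} that the local existence time depends \emph{only} on $\|\mathbf{u}_0\|^{\Omega}_0$ and the modulus of continuity, to get a single $\tau_0>0$ valid for every restart time $\tau$; one restart close enough to $T_{\rm{max}}$ then finishes the argument. The paper instead extracts a subsequence $\mathbf{u}(t_n,\cdot)\to\tilde{\mathbf{u}}_0$ in $C(\ol{\Omega})$, applies Theorem \ref{LOCALEX} once to the single limit datum $\tilde{\mathbf{u}}_0$ to get an existence time $\sigma$, and then invokes the continuous dependence result (Theorem \ref{CDSID}) to transfer that existence interval to $\mathbf{u}(t_n,\cdot)$ for $n$ large. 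Your version buys economy: it never needs Theorem \ref{CDSID}, only the quantitative dependence of $T$ on the data asserted in Theorem \ref{LOCALEX} together with the time-shift remark; the paper's version is more robust in that it only needs local existence for one datum plus stability, so it would survive even if the existence time depended on the datum in a less transparent way. Both arguments correctly note (you explicitly, the paper implicitly) that the Dirichlet compatibility condition at the restart time is automatic, and both close the argument by uniqueness on the overlap. No gap.
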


\begin{proof}
First, let us suppose that $T_{\rm{max}} < \infty$. Then, we set
$\mathbf{u}_{n}(\mathbf{x})= \mathbf{u}(t_{n},\mathbf{x})$, with
$t_{n} \to T_{\rm{max}}$ (monotonically crescent). Since $\ol{D}$ is
compact by hypothesis, hence the sequence contains a subsequence
$\mathbf{u}_{n_i}(\mathbf{x})$, which converges in
$C(\ol{\Omega})$. Denote this subsequence also by
$\mathbf{u}_{n}(\mathbf{x})$, and
$$
  \mathbf{u}_{n}(\mathbf{x}) \to
  \tilde{\mathbf{u}}_{0}(\mathbf{x})
  \quad \text{in $C(\ol{\Omega})$}.
$$
From Theorem \ref{LOCALEX} there exists $\sigma>0$, such that the
problem \eqref{PS}--\eqref{BC} has a classical solution
$\mathbf{u}(t,\mathbf{x};\tilde{\mathbf{u}}_0)$ if $t \in
[T_{\rm{max}},T_{\rm{max}}+\sigma]$. Moreover, it follows from Theorem
\ref{CDSID} that, for $n$ sufficiently large the classical
solution of \eqref{PS}, \eqref{ID} and \eqref{BC} (i.e. with
initial data $\mathbf{u}_{n}(\mathbf{x})=
\mathbf{u}(t_{n},\mathbf{x})$) exists if $t \in [t_n,
t_n+\sigma]$. Therefore, the function
$$
  \mathbf{u}(t,\mathbf{x})=
  \left \{
  \begin{aligned}
    \mathbf{u}(t,\mathbf{x;\mathbf{u}_0}) &\quad \text{if $t \in
    [0,t_{n}]$},
    \\
    \mathbf{u}(t,\mathbf{x};\mathbf{u}_{n}) &\quad \text{if $t \in
    [t_{n},t_n+\sigma]$},
  \end{aligned}
  \right.
$$
is a classical solution of \eqref{PS}--\eqref{BC}. Consequently,
for $n$ sufficiently large, $t_{n}+\sigma > T_{\rm{max}}$, which
is a contradiction.
\end{proof}

\begin{corollary}
\label{CORGS} If $\gamma> 0$ and $\ol{D}$ is bounded set in
$C^\gamma(\ol{\Omega})$, then the problem \eqref{PS}, \eqref{ID}
and \eqref{BC} has global solution in $Q_T$.
\end{corollary}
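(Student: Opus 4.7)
The plan is to reduce this corollary directly to Theorem \ref{THMGES} by verifying its hypothesis, namely that $\ol{D}$ is compact in $C(\ol{\Omega})$. Since $\ol{D}$ is bounded in $C^\gamma(\ol{\Omega})$ by assumption, the task is to show that such a bound upgrades to pre-compactness in the weaker topology of $C(\ol{\Omega})$, which is a standard application of the Arzel\`a--Ascoli theorem.

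More precisely, I would proceed as follows. First, I would note that $\Omega$ is a bounded domain of class $C^1$, so $\ol{\Omega}$ is a compact metric space. Second, let $M:=\sup_{\mathbf{u}\in \ol{D}} \|\mathbf{u}\|_{C^\gamma(\ol{\Omega})} < \infty$. For every $\mathbf{u} \in \ol{D}$ and every $\mathbf{x},\mathbf{y} \in \ol{\Omega}$, the bound
\begin{equation*}
|\mathbf{u}(\mathbf{x})-\mathbf{u}(\mathbf{y})| \leq M\, |\mathbf{x}-\mathbf{y}|^{\gamma}
\end{equation*}
holds, together with the pointwise bound $|\mathbf{u}(\mathbf{x})| \leq M$. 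These two facts say that $\ol{D}$ is uniformly bounded and (via the common modulus of continuity $\omega(r)=M r^{\gamma}$, which is independent of $\mathbf{u}$) uniformly equicontinuous on the compact set $\ol{\Omega}$. By the Arzel\`a--Ascoli theorem, $\ol{D}$ is relatively compact in $C(\ol{\Omega})$, and since it is already closed in that space (being the closure in a finer topology of continuous functions on a compact set), it is in fact compact.

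With compactness of $\ol{D}$ in $C(\ol{\Omega})$ established, I would invoke Theorem \ref{THMGES} directly to conclude $T_{\rm{max}}=\infty$, yielding global existence in every $Q_T$. There is no real obstacle here; the only subtle point worth double-checking is that the set $\ol{D}$ of interest (the orbit $\{\mathbf{u}(t,\cdot;\mathbf{u}_0):t\in(\ve,T_{\rm{max}})\}$ from the definition above Theorem \ref{THMGES}) is the same one whose $C^\gamma$-bound is being assumed, so that the hypothesis of Theorem \ref{THMGES} is verbatim satisfied. No structural or PDE argument is required at the level of the corollary itself; all of that work has been absorbed into Theorem \ref{THMGES}.
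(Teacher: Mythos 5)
Your argument is correct and is exactly the intended one: the paper states this corollary without proof, leaving implicit precisely the Arzel\`a--Ascoli step you spell out (a uniform $C^\gamma(\ol{\Omega})$ bound gives uniform boundedness and equicontinuity on the compact set $\ol{\Omega}$, hence pre-compactness of $D$ in $C(\ol{\Omega})$), after which Theorem \ref{THMGES} applies verbatim. The only cosmetic point is that you do not need $\ol{D}$ to be closed in $C(\ol{\Omega})$ a priori; it suffices that the closure of $D$ in $C(\ol{\Omega})$ is compact, which Arzel\`a--Ascoli already gives.
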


\smallskip
For each $\varepsilon >0$, we denote hereafter $Q_T^{\varepsilon}:= (\varepsilon,T) \times \Omega$, 
also $\Gamma_T^{\varepsilon}:= (\varepsilon,T) \times \Gamma$. Then,
another consequence of Theorem \ref{THMGES} is the following

\begin{proposition} \label{PROPREG}
Let $T>0$ be arbitrary and conditions \eqref{REG1}, \eqref{REG2} hold. Assume
that $u_\alpha(t,\mathbf{x})$ is a classical solution for 
the initial-boundary value problem
\eqref{PS}, \eqref{ID} and \eqref{BC} in $Q_T$, with $u_\alpha \in C(\overline{Q}_T)$. Then, 
there exists $\gamma \in (0,1)$, such that,
the unique classical solution exists globally in time in
$\overline{Q}_T$, and for each $\varepsilon>0$, $u_\alpha \in C^{2+\gamma,
1+\frac{\gamma}{2}}_{x, \quad t}(\overline{Q^\varepsilon_T})$.

\end{proposition}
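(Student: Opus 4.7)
The plan is to combine the hypothesis $u_\alpha\in C(\overline{Q}_T)$ with Theorem~\ref{THMGES} to secure global existence on $\overline{Q}_T$, and then to upgrade regularity by restarting the local existence theorem from each positive time.

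First I would observe that, because $u_\alpha$ is continuous on the compact set $\overline{Q}_T$, the map $t\mapsto \mathbf{u}(t,\cdot)$ is a continuous function from $[0,T]$ into $C(\overline{\Omega})$. Consequently the orbit $\{\mathbf{u}(t,\cdot):t\in[0,T]\}$ is compact in $C(\overline{\Omega})$, and \emph{a fortiori} the set $D$ of Theorem~\ref{THMGES} has compact closure. Since this hypothesis can be invoked for every $T>0$, Theorem~\ref{THMGES} rules out finite-time blow-up and delivers a classical solution existing globally in time. Uniqueness on each $Q_T$ follows from the continuous-dependence estimate in Theorem~\ref{CDSID} applied with $\mathbf{v}_0=\mathbf{u}_0$ and $\mathbf{v}_b=\mathbf{u}_b$.

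To obtain the $C^{2+\gamma,1+\gamma/2}_{x,t}$ regularity on $\overline{Q_T^\varepsilon}$, I would then restart Theorem~\ref{LOCALEX} from the initial datum $\mathbf{u}(\tau,\cdot)\in C(\overline{\Omega})\subset E$ at each $\tau\in[0,T)$. Because the family $\{\mathbf{u}(\tau,\cdot)\}_{\tau\in[0,T]}$ is compact in $C(\overline{\Omega})$, both its sup-norm $\|\mathbf{u}(\tau,\cdot)\|^{\Omega}_0$ and its modulus of continuity are uniform in $\tau$. Hence the existence time $\sigma>0$ produced by Theorem~\ref{LOCALEX} (see the second item of the remark after Theorem~\ref{CDSID}) can be chosen independent of $\tau$. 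The resulting classical solution lies in $H_0^{2+\gamma}$ on $[\tau,\tau+\sigma]\times\overline{\Omega}$, and by uniqueness it coincides with the restriction of $\mathbf{u}$ to that strip. After restricting further to any substrip $[\tau+\delta,\tau+\sigma]\times\overline{\Omega}$ with $\delta>0$, the time weights in the definition of the $H_0^{2+\gamma}$ norm are bounded below, so $\mathbf{u}$ lies in the unweighted space $C^{2+\gamma,1+\gamma/2}_{x,t}$ there. Covering the compact interval $[\varepsilon,T]$ by finitely many overlapping such substrips, obtained by letting $\tau$ range over a finite subset of $[0,\varepsilon)$, patches together to give $u_\alpha\in C^{2+\gamma,1+\gamma/2}_{x,t}(\overline{Q_T^\varepsilon})$.

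The main obstacle I anticipate is justifying the uniformity of $\sigma$ in $\tau$: one needs that the local existence time in Theorem~\ref{LOCALEX} really depends only on $\|\mathbf{u}_0\|^{\Omega}_0$ and the modulus of continuity of $\mathbf{u}_0$, so that compactness of the orbit in $C(\overline{\Omega})$ translates into a uniform time step. A secondary point to verify is the compatibility condition at each restarting time $\tau>0$ for the Dirichlet components: since $\mathbf{u}$ is already a classical solution of \eqref{PS}--\eqref{BC} on $Q_T$, its trace at $t=\tau$ coincides with $\mathbf{u}_b(\tau,\cdot)$ on $\partial\Omega$, so the agreement condition needed to re-enter $E$ holds automatically. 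With these two points in hand the argument closes.
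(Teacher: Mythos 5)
Your proposal is correct and takes essentially the same route as the paper, whose entire proof is the one-line remark that the claim follows from Theorem \ref{THMGES} together with the definition of the weighted spaces $H_0^{2+\gamma}$ --- exactly the two ingredients you elaborate (compactness of the orbit in $C(\overline{\Omega})$ from continuity on the compact cylinder, then removal of the time weight away from $t=0$). One cosmetic slip: to cover $[\varepsilon,T]$ by the strips $[\tau+\delta,\tau+\sigma]$ the restart times $\tau$ must range over a finite subset of $[0,T)$, not of $[0,\varepsilon)$.
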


\begin{proof}
The proof follows from Theorem \ref{THMGES} and the definition of
the weighted spaces.
\end{proof}

\section{Three-phase capillary-flow type systems}\label{Three-phase capillary}

In this part of the article, we establish the existence (global)
and uniqueness theorem for the IBVP \eqref{PS}--\eqref{BC}, when
$N= 2$ and the system admits some additional conditions, to be
precisely below. 
Here to avoid more technicality, we assume zero-flux boundary condition.
This system is motivated by one-dimensional
three-phase capillary flow through porous medium (e.g. oil, water
and gas), which is related for instance to planning operation of
oil wells. Here, we are not going to enter in more physical
details. The interesting reader is addressed to H. Frid and V.
Shelukhin \cite{HFVC}, in order to obtain more information about
the laws of multi-phase flows in a porous medium, and also
applications.

\bigskip
First, we define for $\eta \in \R^d$
\begin{equation}
\label{DLS}
    \begin{aligned}
  \Lambda_{\al\bt}(\mathbf{x},\mathbf{u},\eta) &:=
  A_{\al\bt}^{jk}(\mathbf{x},\mathbf{u}) \; \eta^j \; \eta^k,
\\
 \Lambda_{\al\bt}^\mathbf{n}(\mathbf{x},\mathbf{u},\eta) &:=
  A_{\al\bt}^{jk}(\mathbf{x},\mathbf{u}) \; n^j \; \eta^k.
  \end{aligned}
\end{equation}
Then, we have the following
\begin{definition}
The family $\{A_{\al\bt}^{jk}\}$ is called {\bf normally elliptic}
on $Q_T$, when $\\$ i) For each $(t,\mathbf{x}) \in \ol{Q}_T$, and
all $\eta \in S^{d-1}$
\begin{equation}
\label{ELPCD1}
  \sigma\Big(\Lambda_{\al\bt}(\mathbf{x},\mathbf{u},\eta) \Big)
  \subset \big[Re (z) > 0\big] \equiv \{ z \in
  \C: Re (z)> 0\},
\end{equation}
where $\mathbf{u}= \mathbf{u}(t,\mathbf{x})$,  and $\sigma(M)$ denotes the
spectrum of the matrix $M$. $\\$
\\
ii) For each $r \in \Gamma$, $\xi \in T_r \Gamma$, $|\xi|= 1$ and
$\omega \in \big[Re (z) \geq 0\big]$, with $\omega \neq 0$, zero
is the unique exponentially decaying solution of the BVP
\begin{equation}
\label{EP}
  \begin{aligned}
  \big[ \omega + \Lambda_{\al\bt}(\mathbf{x},\mathbf{u}, \eta + \mathbf{n} \; i \partial_t) \big] \mathbf{u}&=
  0 \qquad \text{on $\Gamma_T$},
  \\[5pt]
   \Lambda^\mathbf{n}_{\al\bt}(\mathbf{x},\mathbf{u}, \eta + \mathbf{n} \; i \partial_t) \mathbf{u}_0&=
   0 \qquad \text{on $\Gamma$},
  \end{aligned}
\end{equation}
where $i$ is the imaginary number. Moreover, the system \eqref{PS}
is said parabolic in Petrovskii-sense and admits the Lopatinski's
compatibility condition, when the family $\{A_{\al\bt}^{jk}\}$
respectively satisfies \eqref{ELPCD1} and \eqref{EP} (see, for instance,  \S 8
Chapter VII in Ladyzenskaja, Solonikov and Ural'ceva
\cite{OALVASNNU}).
\end{definition}

In particular, a simple case of normally elliptic family occurs
when the family $\{A_{\al\bt}^{jk}\}$ is triangular w.r.t. the
Greek indexes, for instance upper triangular. Hence in this case,
$\Lambda$ and $\Lambda^\mathbf{n}$ are also upper triangular and
conditions \eqref{ELPCD1}, \eqref{EP} holds true if, and only if,
there exists $\mu_0> 0$, such that
\begin{equation*}
\label{PLC}
   \Lambda_{\al\al}(\mathbf{x},\mathbf{u},\eta) \geq \mu_0,
\end{equation*}
which is satisfied in our case, since from assumption \eqref{REG1}
we have
$$
  \Lambda_{\al\al}(\mathbf{x},\mathbf{u},\eta)=
  A^{jk}_{\al\al}(\mathbf{x},\mathbf{u}) \, \eta^j \, \eta^k
   \geq \la_0 > 0.
$$
Therefore, from this point we shall consider
\begin{equation}
\label{AIJ21}
  A_{21}^{jk}(\mathbf{x},\mathbf{u}) \equiv 0.
\end{equation}

\medskip
Now, we consider the following domain
\begin{equation}
\label{UB1}
   B_\Delta:= \big\{(u_1,u_2) \in \R^2 : 0 \leq
   u_1,u_2 \leq 1, \; u_1 + u_2 \leq 1 \big\},
\end{equation}
which is motivated by the applications. Then, we assume 
\begin{equation}
\label{IBCBD}
    \mathbf{u}_0(\mathbf{x}), \mathbf{u}_b(t,\mathbf{x})
    \in B_\Delta \quad \text{for each
$\mathbf{x} \in \ol{\Omega}$ and $t \geq 0$}.
\end{equation}
Following Serre's book \cite{DS1}, Vol 2 (Chapter 1), we may have
the triangle $B_\Delta$ written also by the intersection of the
following functions
$$
  G_1(\mathbf{u})= -u_1, \quad G_2(\mathbf{u})= -u_2,
  \quad G_3(\mathbf{u})= -1 + u_1 + u_2,
$$
that is,
$$
  B_\Delta \equiv \bigcap_{h=1}^3 \big\{\mathbf{u} \in \R^2 : G_h(\mathbf{u}) \leq 0
  \big\},
  \quad
  \partial B_\Delta= \big\{\mathbf{u} \in \ol{B}_\Delta : G_h(\mathbf{u})= 0, h=1,2,3\big\}.
$$
We seek for functions $\mathbf{u}(t,\mathbf{x})$ solutions of the
IBVP \eqref{PS}--\eqref{BC}, such that $\mathbf{u}(t,\mathbf{x})
\in B_\Delta$, which is satisfied under some additional hypotheses
on $\varphi_\alpha^j$, $A_{\alpha \beta}^{jk}$ and $g_\alpha$,
which is to say, for each $\mathbf{x} \in \ol{\Omega}$, and all
$\mathbf{u} \in
\partial B_\Delta$, ($h= 1,2,3$, here no summation on indices
$h$)
\begin{equation}
\label{FC}
  \frac{\partial \varphi_\al^j(\mathbf{x},\mathbf{u})}{\partial u_\bt}
  \; \frac{\partial G_h(\mathbf{u})}{\partial u_\al}=
  \la^j_h(\mathbf{x},\mathbf{u}) \; \frac{\partial G_h(\mathbf{u})}{\partial
  u_\bt},
\end{equation}
\begin{equation}
\label{LC}
    A^{jk}_{\al\bt}(\mathbf{x},\mathbf{u})
  \; \frac{\partial G_h(\mathbf{u})}{\partial u_\al}=
  \mu^{jk}_h(\mathbf{x},\mathbf{u}) \; \frac{\partial G_h(\mathbf{u})}{\partial
  u_\bt},
\end{equation}
\begin{equation}
\label{GGC}
 \big( g_\al(\mathbf{x},\mathbf{u}) -
 \gamma_\al(\mathbf{x},\mathbf{u}) \big)
 \; \frac{\partial G_h(\mathbf{u})}{\partial u_\al}
  \leq 0,
\end{equation}
%
%
\begin{equation}
\label{FCB}
 \big(\vp_\al^j \; n^j \big)|_{\{u_\al= 0\}}=
 \big((\vp_1^j+\vp_2^j) \; n^j \big)|_{\{u_1+u_2= 0\}} \equiv 0,
\end{equation}
for some functions $\la^j_h$ and $\mu^{jk}_h$, where $\gamma_\alpha(\mathbf{x},\mathbf{v}):= 
\partial_{x_j} \varphi_\alpha^j(\mathbf{x},\mathbf{v})$.
Remark that, for
$h= 1,2,3$, $\mu^{jk}_h \eta^j \eta^k > 0$. 

\medskip
Due to
assumptions \eqref{FC}--\eqref{FCB} we can adapt the theory of
(positively) invariant regions for nonlinear parabolic systems
developed by Chuey, Conley and Smoller \cite{CCS}, see also the
cited book of Serre and the Smoller's book \cite{JS}. One remarks
that, conditions \eqref{FC}, \eqref{LC} holds true for instance in
three-phase capillary flow in porous media (black oil model),
further in this case, \eqref{GGC} is trivially satisfied.

\medskip
Under the above considerations, let us assume that the solution $u_\al(t,\mathbf{x})$
of the system \eqref{PS}--\eqref{BC} exists for each $t \in [0,T_{\rm{max}})$,
with $T_{\rm{max}}< \infty$. If we show that $u_\al(t,\mathbf{x}) \in C^\gamma(\bar{Q}^\varepsilon_{T_{\rm{max}}})$,
$\gamma \in (0,1)$, then from Corollary \ref{CORGS}, it follows that $T_{\rm{max}}= \infty$, which is a contradiction.
Hence $T_{\rm{max}}= \infty$ and we have classical global existence theorem. In fact, to establish a global
result we need first a uniform estimate, which will be given in the next section.

\subsection{Positively invariant regions} \label{IBVP}

We consider the parabolic system \eqref{PARABOLICCF} with the initial-boundary data \eqref{ID} and \eqref{BC}.
Let $\mathbf{v}= (v_1,v_2)$ be a constant vector, such that
$v_1,v_2 >0$ and $v_1+v_2 < 1$. For each $\epsilon> 0$, we
regard the following approximated system 
\begin{equation}
\label{PSP}
\begin{aligned}
   \partial_t u^\epsilon_\al +
   \divx \varphi_\al(\mathbf{x},\mathbf{u}^\epsilon)&= \divx \big(
   A_{\al\bt}(\mathbf{x},\mathbf{u}^\epsilon) \nabla u^\epsilon_\bt \big) + g^\epsilon_\al(\mathbf{x},\mathbf{u}^\epsilon),
   \end{aligned}
\end{equation}
with the initial condition in $\Omega$
\begin{equation}
\label{ICA}
\begin{aligned}
    \mathbf{u}^\epsilon(0,\mathbf{x})= \mathbf{u}^\epsilon_0(\mathbf{x}),
\end{aligned}
\end{equation}
and the boundary conditions on $\Gamma_T$
\begin{equation}
\label{BCA}
\begin{aligned}
   \big(\; \varphi_\al^j(\mathbf{x},\mathbf{u}^\epsilon)
   - A^{jk}_{\al\bt}(\mathbf{x},\mathbf{u}^\epsilon) \; \frac{\partial u^\epsilon_\bt}{\partial x_k} \;
    \big)\; n^j &= \epsilon \;  A^{jk}_{\al\bt}(\mathbf{x},\mathbf{u}^\epsilon)  \; n^j \; \eta^k
   (u^\epsilon_\bt-u^\epsilon_{b\bt}),  
    \\[5pt]
   \text{or} \quad u^\epsilon_\al &= u^\epsilon_{b\al},
\end{aligned}
\end{equation}
where 
$g^\epsilon_\al(\mathbf{x},\mathbf{u}^\epsilon)= g_\al(\mathbf{x},\mathbf{u}^\epsilon)    
   + \epsilon \big(v_\al - u^\epsilon_\al \big)$, 
$\mathbf{u}^\epsilon_0:= (1-\epsilon) \; \big( \mathbf{u}_0 + \epsilon/2
\big)$ and similarly $\mathbf{u}^\epsilon_b:= (1-\epsilon) \; \big(
\mathbf{u}_b + \epsilon/2 \big)$.



\bigskip
Let $T> 0$ be given. In order to show the existence of positively invariant regions, we assume that, for each $\epsilon>0$, there exists
$u_\alpha^\epsilon \in C^{2,1}(Q_T)$ the unique solution to \eqref{PSP}--\eqref{BCA}, see Proposition \ref{PROPREG}. Then, we have the following 

\begin{lemma}\label{PIR}
Let $T> 0$ be given. 
If $\mathbf{u}_0$, $\mathbf{u}_b$ take values in $B_\Delta$,
then the unique solution $\mathbf{u}^\epsilon$ of \eqref{PSP}--\eqref{BCA} take values in
the interior of $B_\Delta$, i.e. $\mathbf{u}^\epsilon(t,\mathbf{x}) \in {\rm int}
B_\Delta$, for each $(t,\mathbf{x}) \in \ol{Q_T}$.
\end{lemma}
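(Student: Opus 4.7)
The strategy is a first-contact (scalar maximum principle) argument applied separately to each of the three affine functionals $G_1, G_2, G_3$ defining $B_\Delta$. Set $\Phi_h := G_h(\mathbf{u}^\epsilon)$ for $h = 1, 2, 3$ and note first that the shifted data lie strictly inside the triangle: from $\mathbf{u}_0^\epsilon = (1-\epsilon)(\mathbf{u}_0 + \epsilon/2)$ with $\mathbf{u}_0 \in B_\Delta$ one has $u_{0\alpha}^\epsilon \geq (1-\epsilon)\epsilon/2$ and $u_{01}^\epsilon + u_{02}^\epsilon \leq 1 - \epsilon^2$, so $\Phi_h(0, \cdot) < 0$ for $h = 1, 2, 3$, and the analogous bound holds for the trace $\mathbf{u}_b^\epsilon$.

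The next step is to show that, on the face $\{\Phi_h = 0\}$, $\Phi_h$ satisfies a scalar linear parabolic identity of the form
$$
\partial_t\Phi_h - \mu_h^{jk}\partial_{x_j x_k}\Phi_h - \tilde c_k \partial_{x_k}\Phi_h = \Psi_h + R_h,
$$
where the residue $R_h$ vanishes whenever $\nabla\Phi_h = 0$ and $\Psi_h := (g_\alpha - \gamma_\alpha)\frac{\partial G_h}{\partial u_\alpha} + \epsilon(v_\alpha - u_\alpha^\epsilon)\frac{\partial G_h}{\partial u_\alpha}$. To derive this I would use that $G_h$ is affine (so every second derivative of $G_h$ vanishes) together with \eqref{FC}, which collapses the convective contribution $\frac{\partial G_h}{\partial u_\alpha}\frac{\partial\varphi_\alpha^j}{\partial u_\beta}\partial_{x_j} u_\beta^\epsilon$ into $\lambda_h^j\partial_{x_j}\Phi_h$, and \eqref{LC}, which (together with the face-level identities it implies for the derivatives of $A_{\alpha\beta}^{jk}$ along the face $G_h = 0$) converts $\frac{\partial G_h}{\partial u_\alpha}\divx(A_{\alpha\beta}\nabla u_\beta^\epsilon)$ into $\mu_h^{jk}\partial_{x_j x_k}\Phi_h$ plus the announced drift and residue terms. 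The essential point is then that $\Psi_h < 0$ on $\{\Phi_h = 0\}$: the first summand is $\leq 0$ there by \eqref{GGC}, and a face-by-face inspection using $v_1, v_2 > 0$ and $v_1 + v_2 < 1$ gives the second summand the strictly negative values $-\epsilon v_1$, $-\epsilon v_2$, $\epsilon(v_1 + v_2 - 1)$ on the faces $\{u_1 = 0\}$, $\{u_2 = 0\}$, $\{u_1 + u_2 = 1\}$ respectively.

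Suppose now, for contradiction, that $t_0 := \inf\{t \in (0, T] : \Phi_h(t, \mathbf{x}) = 0 \text{ for some } h, \mathbf{x}\}$ is finite and let $(t_0, \mathbf{x}_0, h)$ realize the first contact. If $\mathbf{x}_0 \in \Omega$, the standard parabolic maximum principle gives $\partial_t\Phi_h \geq 0$, $\nabla\Phi_h = 0$ and $\mu_h^{jk}\partial_{x_j x_k}\Phi_h \leq 0$ at $(t_0, \mathbf{x}_0)$, and the identity above (with $R_h = 0$) yields $0 \leq \Psi_h < 0$, a contradiction. If $\mathbf{x}_0 \in \partial\Omega$, I would apply $\frac{\partial G_h}{\partial u_\alpha}$ to the Robin-type boundary identity \eqref{BCA}, use \eqref{LC} on the diffusive side and the structural flux condition \eqref{FCB} to eliminate $\frac{\partial G_h}{\partial u_\alpha}\varphi_\alpha^j n^j$ on the face; the resulting identity reads $\mu_h^{jk} n^j \partial_{x_k}\Phi_h = \epsilon\mu_h^{jk} n^j \eta^k \Phi_h^b$, whose right-hand side is strictly negative since $\Phi_h^b := G_h(\mathbf{u}_b^\epsilon) < 0$ and the conormal form is positive definite, contradicting the strict positivity of the conormal derivative provided by Hopf's lemma for the scalar subsolution $\Phi_h$.

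The main obstacle is the boundary step: one must verify that the $\epsilon$-Robin perturbation in \eqref{BCA} interacts with \eqref{FCB} in precisely the right way on each of the three faces of $\partial B_\Delta$, so that the convective boundary flux $\frac{\partial G_h}{\partial u_\alpha}\varphi_\alpha^j n^j$ can be removed on the relevant face and Hopf's lemma is blocked. Once the boundary interaction is controlled, the interior step reduces to bookkeeping with the face-level identities obtained by differentiating \eqref{FC} and \eqref{LC}.
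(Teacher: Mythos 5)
Your proposal follows essentially the same route as the paper's proof: the same first-contact argument for $Z_h=G_h(\mathbf{u}^\epsilon)$, the same use of \eqref{FC}, \eqref{LC}, \eqref{GGC} and \eqref{FCB} to reduce matters to a scalar maximum-principle contradiction in the interior and on the boundary, and the same face-by-face verification that the penalization term $\epsilon\,\frac{\partial G_h}{\partial u_\al}(v_\al-u_\al^\epsilon)$ is strictly negative on $\partial B_\Delta$. The only cosmetic difference is at the boundary, where you invoke Hopf's lemma for strict positivity of the conormal derivative, while the paper needs only the elementary fact that this derivative is nonnegative at a boundary maximum, which already contradicts the strictly negative right-hand side.
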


\begin{proof}
For simplicity, we drop throughout the proof the superscript
$\epsilon$, whenever it is not strictly necessarily. First, let us denote $Z_h(t,\mathbf{x})= G_h(\mathbf{u}(t,\mathbf{x}))$,
$(h=1,2,3)$. Since $\mathbf{u}^\epsilon_0 \in {\rm int} B_\Delta$, we have
$$
  \sup_{\mathbf{x} \in \Omega} Z_h(0,\mathbf{x}) < 0, \qquad (h= 1,2,3).
$$
By contradiction, suppose that there exists $t_0 > 0$, the first
time, such that
$$
  Z_h(t_0,\mathbf{x}_0):= \sup_{\mathbf{x} \in \Omega} Z_h(t_0,\mathbf{x}) = 0,
$$
for some $h= 1, 2$ or $3$. That is to say, $\mathbf{u}(t_0,\mathbf{x}_0)
\in \partial B_\Delta$, where $\mathbf{x}_0 \in \ol{\Omega}$ and
$$
  \sup_{\mathbf{x} \in \Omega} Z_h(t,\mathbf{x}) < 0 \quad \text{for all $0 \leq t < t_0$}.
$$

If $\mathbf{x}_0 \in \Omega$, then it follows by \eqref{PSP} at $(t_0,\mathbf{x}_0)$
that
$$
  \partial_t u_\al + \frac{\partial \vp_\al^j}{\partial u_\bt} \;
  \frac{\partial u_\bt}{\partial x_j} + \gamma_\al =
  \frac{\partial}{\partial x_j} \Big(A_{\al\bt}^{jk}(\mathbf{x},\mathbf{u})
   \frac{\partial u_\bt}{\partial x_k}\Big)
  + g_\al + \epsilon \big(v_\al - u_\al \big).
$$
Now, we multiply the above equation by $\partial G_h/\partial
u_\al$ and applying conditions \eqref{FC} and \eqref{LC}, we have
\begin{equation}
\label{LPIR}
  \begin{aligned}
  \partial_t Z_h + \la^j_h \;
  \frac{\partial Z_h}{\partial x_j} &=
  \frac{\partial \mu^{jk}_h}{\partial x_j} \; \frac{\partial Z_h}{\partial
  x_k} + \mu^{jk}_h \; \frac{\partial^2 Z_h}{\partial x_k \partial x_j}
\\[5pt]
  &\quad+ \frac{\partial G_h}{\partial u_\al} (g_\al-\gamma_\al)
  + \epsilon \; \frac{\partial G_h}{\partial u_\al}\big(v_\al - u_\al \big).
  \end{aligned}
\end{equation}
By assumption of the contradiction, we must have at $(t_0,\mathbf{x}_0)$
$$
  \partial_t Z_h \geq 0, \quad \nablax Z_h= 0,
  \quad \deltax Z_h \leq 0,
$$
hence by \eqref{LPIR}, using \eqref{GGC} and since $\mu_h^{jk}
\eta^j \eta^k > 0$, we obtain the following contradiction
$$
  0 \leq \partial_t Z_h \leq \epsilon \; \frac{\partial G_h}{\partial u_\al} \big(v_\al - u_\al \big)
  < 0.
$$
Indeed, suppose for instance $Z_1(t_0,\mathbf{x}_0)= 0$, hence
$u_1(t_0,\mathbf{x}_0)= 0$ and then, we have
$$
  0 \leq \partial_t Z_1 \leq \epsilon \; (-1) \big(v_1 - 0 \big)< 0.
$$
Analogous result when $h= 2$. For $h= 3$, we have
$u_1(t_0,\mathbf{x}_0)+u_2(t_0,\mathbf{x}_0)= 1$, therefore
$$
  0 \leq \partial_t Z_3 \leq \epsilon \; \big(v_1+v_2 - 1\big)< 0.
$$

\medskip
It remains to show the case $\mathbf{x}_0 \in \Gamma$, and
let us first consider the simple case of Dirichlet boundary condition. 
Indeed, the cases $h=1,2$ are trivial, which is to say, we obtain the contradiction  $u_{b\alpha} < 0$.
For $h= 3$, it follows that
$$
  \begin{aligned}
  0 &=
  (1-\epsilon) (u_{b1} + \epsilon/2)  - u_1 + (1-\epsilon) (u_{b1} + \epsilon/2)  -
  u_2
  \\
  &= (u_{b1} + u_{b2} - 1) + (\epsilon - \epsilon (u_{b1} + u_{b2})) -
  \epsilon^2.
  \end{aligned}
$$
It is enough to consider the limit possibilities, that is, $u_{b1}
+ u_{b2}= 0$ and $u_{b1} + u_{b2}= 1$, and the result follows.
Finally, we consider the flux condition. 
Recall that, 
the flux condition is given by 
$$
   \big(\; \varphi_\al^j(\mathbf{x},\mathbf{u})
   - A^{jk}_{\al\bt}(\mathbf{x},\mathbf{u}) \; \frac{\partial u_\bt}{\partial x_k} \;
    \big)\; n^j= \epsilon \; A^{jk}_{\al\bt}(\mathbf{x},\mathbf{u}) \; n^j \; \eta^k
   (u_\bt-u_{b\bt}).
$$
Thus multiplying the above equation by $\partial G_h/\partial
u_\al$, we obtain
$$
  \mu^{jk}_h \; n^j \; \frac{\partial Z_h}{\partial x_k}= \epsilon \;
  \mu_h^{jk} \; n^j \; \eta^k \;
  \frac{\partial G_h}{\partial u_\bt} \; (u_{b\bt} - u_\bt),
$$
where we have used \eqref{FCB}. 
From a similar argument used before for
Dirichlet condition, we derive the
following contradiction
$$
  0 \leq \mu^{jk}_h \; n^j \; \frac{\partial Z_h}{\partial x_k} <
  0,
$$
which finish the proof.
\end{proof}

\begin{corollary}
\label{CORIR}
Since the estimates in Lemma \ref{PIR} for $\mathbf{u}^\epsilon(t,\mathbf{x})$ depends continuously on $\epsilon>0$,
therefore passing to the limit as $\epsilon$ goes to $0^+$, we obtain
that $\mathbf{u}(t,\mathbf{x}) \in B_\Delta$.
\end{corollary}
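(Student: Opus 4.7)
The plan is to pass to the limit as $\epsilon \to 0^+$ in the family $\{\mathbf{u}^\epsilon\}$ given by Lemma \ref{PIR}, exploiting that $B_\Delta$ is a closed convex subset of $\R^2$. The core observation is that the modifications in going from \eqref{PS}--\eqref{BC} to \eqref{PSP}--\eqref{BCA} are all of order $\epsilon$: the data satisfy
$$
  \mathbf{u}_0^\epsilon - \mathbf{u}_0 = -\epsilon\,\mathbf{u}_0 + (1-\epsilon)\epsilon/2, \qquad \mathbf{u}_b^\epsilon - \mathbf{u}_b = -\epsilon\,\mathbf{u}_b + (1-\epsilon)\epsilon/2,
$$
the source term differs by $g^\epsilon_\alpha - g_\alpha = \epsilon(v_\alpha - u_\alpha^\epsilon)$, and the boundary flux in \eqref{BCA} differs from the original zero-flux condition by the term $\epsilon\,A^{jk}_{\alpha\beta} n^j \eta^k (u_\beta^\epsilon - u^\epsilon_{b\beta})$. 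All of these vanish in the limit.

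First I would verify the uniform $L^\infty$ bound: by Lemma \ref{PIR} the solutions $\mathbf{u}^\epsilon$ take values in the compact triangle $B_\Delta$, independently of $\epsilon$. Combined with the regularity provided by Proposition \ref{PROPREG} applied to the approximate problem, one obtains uniform H\"{o}lder estimates on $\mathbf{u}^\epsilon$ on each subdomain $\overline{Q^\varepsilon_T}$. Then by Arzel\`a--Ascoli one may extract a subsequence converging uniformly on compact subsets to a limit $\tilde{\mathbf{u}}$, and the equi-continuity together with the convergence of the initial/boundary data extends the convergence up to the parabolic boundary.

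Next I would pass to the limit in the weak formulation of \eqref{PSP}--\eqref{BCA}: since the perturbations are $O(\epsilon)$ and the coefficients $A^{jk}_{\alpha\beta}, \varphi_\alpha^j, g_\alpha$ are continuous in $\mathbf{u}$, the limit $\tilde{\mathbf{u}}$ is a classical solution of the unperturbed problem \eqref{PS}--\eqref{BC} with the original data $\mathbf{u}_0, \mathbf{u}_b$. Uniqueness, which follows from Theorem \ref{CDSID}, forces $\tilde{\mathbf{u}} = \mathbf{u}$, so the whole family $\mathbf{u}^\epsilon$ converges to $\mathbf{u}$. Since $\mathbf{u}^\epsilon(t,\mathbf{x}) \in \operatorname{int} B_\Delta \subset B_\Delta$ pointwise and $B_\Delta$ is closed, the pointwise limit satisfies $\mathbf{u}(t,\mathbf{x}) \in B_\Delta$ for every $(t,\mathbf{x}) \in \overline{Q_T}$, which is the claim.

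The main obstacle is that Theorem \ref{CDSID} applies to the same system, whereas the $\epsilon$-family perturbs both the interior equation (via $g^\epsilon$) and the boundary condition (via the $\epsilon$ coupling on $\Gamma_T$). Thus one cannot invoke it as a black box for the stability estimate; instead one must either refine the continuous-dependence argument of \cite{TAAMPV1,TAAMPV2} to absorb lower-order perturbations of size $\epsilon$ into the right-hand side, or argue by compactness and uniqueness as sketched. The latter is cleaner, and the required uniform H\"{o}lder bound is precisely where the a priori $L^\infty$ estimate given by Lemma \ref{PIR} becomes essential.
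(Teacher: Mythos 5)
Your proposal is correct and is essentially a fleshed-out version of the argument the paper itself gives, which consists only of the one-line assertion in the statement of Corollary \ref{CORIR} (``the estimates depend continuously on $\epsilon$, pass to the limit''). Your route --- uniform $L^\infty$ bound from Lemma \ref{PIR}, uniform interior H\"older/energy estimates depending only on that bound and on the ellipticity constant, Arzel\`a--Ascoli, limit passage in the weak formulation, and identification of the limit with $\mathbf{u}$ by uniqueness, followed by closedness of $B_\Delta$ --- is the standard way to make that sentence rigorous, and your observation that Theorem \ref{CDSID} cannot be invoked as a black box (since the $\epsilon$-family perturbs both the source term and the boundary operator) is a genuine point that the paper glosses over. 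The only place you should be slightly more careful is the claim that Proposition \ref{PROPREG} yields H\"older bounds \emph{uniform} in $\epsilon$: that proposition by itself only gives qualitative regularity for each fixed $\epsilon$, and the uniformity must instead be extracted from the quantitative linear-theory estimates (Ladyzhenskaya--Solonnikov--Ural'tseva, Theorem 10.1 of Chapter III, as used later in the paper), whose constants depend only on $\lambda_0$, the $C^2$ bounds of the coefficients on $\overline{\Omega}\times B_\Delta$, and the $L^\infty$ bound --- all of which are indeed $\epsilon$-independent once Lemma \ref{PIR} is in hand.
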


\subsection{Main results}

In this section we consider the problem \eqref{PSP}-\eqref{BCA}
with $\epsilon= 0$, that is
\begin{equation}
\label{PSPZERO}
\begin{aligned}
   \partial_t u_\al +
   \divx \varphi_\al(\mathbf{x},\mathbf{u})&= \divx \big(
   A_{\al\bt}(\mathbf{x},\mathbf{u}) \nablax u_\bt \big)
   + g_\al(\mathbf{x},\mathbf{u}),
   \end{aligned}
\end{equation}
the initial condition in $\Omega$
\begin{equation}
\label{ICAZERO}
\begin{aligned}
    \mathbf{u}(0,\mathbf{x})= \mathbf{u}_0(\mathbf{x}),
\end{aligned}
\end{equation}
and the boundary conditions on $\Gamma_T$
\begin{equation}
\label{BCAZERO}
\begin{aligned}
   &\big(\; \varphi_\al^j(\mathbf{x},\mathbf{u})
   - A^{jk}_{\al\bt}(\mathbf{x},\mathbf{u}) \; \frac{\partial u_\bt}{\partial x_k} \;
    \big)\; n^j= 0, \quad
   \text{or} \quad u_{\al}= u_{b\al}.
\end{aligned}
\end{equation}

Under conditions \eqref{IBCBD}--\eqref{FCB}, we proved that
$\mathbf{u} \in B_\Delta$.
To prove the global solvability of problem \eqref{PSPZERO}--\eqref{BCAZERO},
we use first Theorem \ref{LOCALEX} about time-local classical solvability of this problem
in $H^{2+\gamma}_0(Q_{T})$.
Denote by $[0,T_{\rm{max}})$ the maximal existence interval of solution
from Theorem  \ref{LOCALEX}. Let us assume that $T_{\rm{max}} < \infty$.
If we prove that $u_\alpha \in C^\gamma(Q^\varepsilon_{T_{\rm{max}}})$ and
$\|\mathbf{u}(t)\|_\gamma^{\bar{\Omega}} \leq C$, with $t \in [\varepsilon, T_{\rm{max}})$,
it follows from the proof of Theorem \ref{THMGES} that there exists $\delta>0$, such that,
the solution $\mathbf{u}(t,\mathbf{x})$ exists on $[0,T_{\rm{max}}+\delta)$, which is a contradiction
of the definition of $T_{\rm{max}}$.

\medskip
Let us denote
by $V^{1,0}_2(Q_T)$ the space of functions with finite norm
$$
  \sup_{[0,T]} \|u(t,\cdot)\|_{2,\Omega} + \|\nablax u\|_{2,Q_T}.
$$
It shall be understood that, the vector-value function
$\mathbf{u}$ belongs to $V^{1,0}_2(Q_T)$, if $ u_\al \in
V^{1,0}_2(Q_T)$ for $\al= 1,2$.

\begin{lemma}
\label{MainLemma}
Consider the initial-boundary value problem  \eqref{PSPZERO}-\eqref{BCAZERO}, 
and assume the conditions \eqref{AIJ21}, \eqref{IBCBD}--\eqref{FCB}. If $\mathbf{u}_0 \in E$
and $\mathbf{u}_b \in H_0^{2+\gamma}(\Gamma_T)$,  with $\gamma \in (0,1)$, then
$u_\al \in V^{1,0}_2(Q_{T_{\rm{max}}})$, $\alpha= 1,2$.
\end{lemma}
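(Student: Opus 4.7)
The plan is to prove a uniform parabolic energy estimate on $[0,T_{\rm{max}})$. The essential input is that, by Corollary \ref{CORIR}, the classical solution $\mathbf{u}(t,\mathbf{x})$ lies in $B_\Delta$ for every $t<T_{\rm{max}}$, so the compositions $A_{\alpha\beta}^{jk}(\mathbf{x},\mathbf{u})$, $\varphi_\alpha^j(\mathbf{x},\mathbf{u})$ and $g_\alpha(\mathbf{x},\mathbf{u})$ are uniformly bounded on $\ol{Q}_{T_{\rm{max}}}$ by constants depending only on the $C^2$-data on $\ol{\Omega}\times B_\Delta$. All constants below will depend only on these bounds and on norms of the extension of $\mathbf{u}_b$, so they will be uniform in $T<T_{\rm{max}}$.

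Concretely, since $\mathbf{u}_b\in H^{2+\gamma}_0(\Gamma_T)$, I would pick an extension $\Phi_\alpha\in H^{2+\gamma}_0(Q_T)$ of $u_{b\alpha}$ and set $w_\alpha:=u_\alpha-\Phi_\alpha$. Testing \eqref{PSPZERO} against $w_\alpha$, summing over $\alpha$, and integrating by parts produces the boundary integral
\begin{equation*}
  \int_\Gamma \bigl(\varphi_\alpha^j-A_{\alpha\beta}^{jk}\,\partial_{x_k}u_\beta\bigr)\,n^j\,w_\alpha\,dS,
\end{equation*}
which vanishes componentwise: for indices $\alpha$ with Dirichlet data one has $w_\alpha\equiv 0$ on $\Gamma$, while for indices with the zero-flux condition in \eqref{BCAZERO} the integrand itself is zero. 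What remains is
\begin{equation*}
  \tfrac{1}{2}\tfrac{d}{dt}\|\mathbf{w}(t)\|_{L^2(\Omega)}^2
  + \int_\Omega A_{\alpha\beta}^{jk}\,\partial_{x_k}w_\beta\,\partial_{x_j}w_\alpha\,dx
  = \mathcal{I}(t),
\end{equation*}
where every summand of $\mathcal{I}(t)$ contains at most one derivative of $\mathbf{w}$, the other factors being bounded pointwise (the coefficients, $\mathbf{u}$, $\varphi_\alpha$, $g_\alpha$) or in $L^2$ (the derivatives of $\Phi$).

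By the ellipticity hypothesis \eqref{REG1} the quadratic form dominates $\lambda_0\,\|\nabla\mathbf{w}\|_{L^2(\Omega)}^2$. I would then estimate each term of $\mathcal{I}(t)$ by Young's inequality, absorbing a small fraction of $\|\nabla\mathbf{w}\|_{L^2}^2$ into the coercive term and leaving integrable quantities controlled by $\|\mathbf{u}\|_\infty$, $\|\Phi\|_{H^{2+\gamma}_0}$ and $\|\mathbf{w}\|_{L^2}$. Gr\"onwall's lemma then yields
\begin{equation*}
  \sup_{t\in[0,T)}\|\mathbf{w}(t)\|_{L^2(\Omega)}^2
  + \int_0^T\!\!\int_\Omega|\nabla\mathbf{w}|^2\,dx\,dt \;\leq\; C,
\end{equation*}
with $C$ independent of $T<T_{\rm{max}}$; adding the analogous, already finite, quantities for $\Phi$ gives the required $V^{1,0}_2(Q_{T_{\rm{max}}})$ bound on $u_\alpha$.

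The step I expect to demand the most care is the mixed Dirichlet/flux boundary handling above: the clean componentwise vanishing works precisely because the right-hand side of the flux condition in \eqref{BCAZERO} is zero, which is why the lemma is phrased with $\epsilon=0$. A subsidiary issue is that all Gr\"onwall constants must remain uniform in $T<T_{\rm{max}}$; this holds because the invariant region keeps the nonlinear coefficients bounded throughout the maximal existence interval, so no constant deteriorates as $T\nearrow T_{\rm{max}}$. Note that the triangular assumption \eqref{AIJ21} is not directly exploited here—it is \eqref{REG1} alone that yields coercivity—and the proof therefore does not require the stronger condition \eqref{COND1}.
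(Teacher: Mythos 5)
Your argument is correct and follows essentially the same route as the paper: an $L^2$ energy estimate obtained by testing the equation, using the invariant region $B_\Delta$ to bound the coefficients uniformly up to $T_{\rm{max}}$ and the ellipticity \eqref{REG1} for coercivity, after reducing to homogeneous boundary data by subtracting an extension of $\mathbf{u}_b$ (the paper's only cosmetic difference is that it takes the caloric extension, i.e.\ the solution of the heat equation with data $\mathbf{u}_b$, rather than a generic $H^{2+\gamma}_0$ extension). Your closing observations --- that the boundary terms vanish componentwise for the mixed Dirichlet/zero-flux data and that neither \eqref{AIJ21} nor \eqref{COND1} is needed at this stage --- are consistent with the paper's proof.
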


\begin{proof}
1. If $u_{b\al}= 0$ or we have zero-flux boundary condition, then multiplying \eqref{PSPZERO} by $u_\al$ and integrating in
$\Omega$, we obtain
$$
  \frac{1}{2} \frac{d}{dt} \int_\Omega (u_\al)^2 \,d\mathbf{x} +
  \int_\Omega A^{jk}_{\al\bt}
  \, \frac{\partial u_\bt}{\partial x_k} \, \frac{\partial u_\al}{\partial
  x_j}\,d\mathbf{x} =
  \int_\Omega \vp^{j}_{\al} \, \frac{\partial u_\al}{\partial
  x_j}\,d\mathbf{x}
  + \int_\Omega g_{\al} \, u_\al \,d\mathbf{x}.
$$
Therefore, from \eqref{REG1} and $u_\al \in B_\Delta$,
there exists a positive constant $C_1$, such that
$$
  \frac{1}{2} \frac{d}{dt} \int_\Omega |u_\al|^2 \,d\mathbf{x} +
  \frac{\la_0}{2} \int_\Omega |\nablax u_\al|^2 \, d\mathbf{x} \leq C_1.
$$
Then, we have for each $t \in [0,T_{\rm{max}}]$
\begin{equation}
  \int_\Omega |u_\al(t)|^2 \,d\mathbf{x} +
  \la_0 \iint_{Q_{T_{\rm{max}}}} |\nablax u_\al(t)|^2 \, d\mathbf{x}
  \leq 2 C_1 + \int_\Omega |u_{0\al}|^2 \,d\mathbf{x} =:C_2,
\end{equation}
from which it follows that $u_\al \in
V^{1,0}_2(Q_{T_{\rm{max}}})$. Observe that $C_2$ does not depend
on  $t  < T_{\rm{max}}$.

\medskip
2. Now, if $u_{b\al} \neq 0$, then let us consider the system
$$
    \begin{aligned}
    \partial_t \mathbf{v}&= \divx (\nablax \mathbf{v}) \; &\text{in $Q_{T_{\rm{max}}}$},
    \\
   \mathbf{v}&= \mathbf{u}_{b} \; &\text{on $\Gamma_{T_{\rm{max}}}$},
    \\
    \mathbf{v}|_{t=0}&= \mathbf{v}_{0} \; &\text{in $\Omega$},
    \end{aligned}
$$
where $\mathbf{v}_{0}$ is a function in $C(\bar{\Omega})$,
satisfying the compatibility condition $$v_{0\al}(\mathbf{x})= u_{b\al}(0,\mathbf{x}),$$
for $\mathbf{x}  \in \partial \Omega$. It is clear that, $\mathbf{v} \in H_0^{2+\gamma}(Q_{T_{\rm{max}}})$, and
making $\mathbf{\tilde{u}}= \mathbf{u} - \mathbf{v}$, we apply the same argument as in item 1,
multiplying now by $\mathbf{\tilde{u}}$.
\end{proof}

\bigskip
Before we pass to consider separated the Dirichlet and flux conditions, let us consider another important restriction, which will be used in particular for 
$d \geq 2$ with the Dirichlet boundary condition, that is

\begin{equation}\label{COND2}
    \partial \varphi^{j}_{2} / \partial u_1 \equiv 0.
\end{equation}

\subsubsection{ Dirichlet condition } \label{SECDIRICHLET}

First, let us consider Dirichlet boundary condition in \eqref{BCAZERO}, i.e. for $\alpha =1, 2$. 
To begin, we work with equation \eqref{PSPZERO}, when $\alpha= 2$. 

\begin{lemma}
\label{LCONU2D} Let $0<\ve< T_{\rm{max}}$ be fixed and consider for $\alpha= 2$ the
initial-boundary value problem \eqref{PSPZERO}--\eqref{BCAZERO} (Dirichlet condition), with $u_{0_2} \in E$
and $u_{b_2} \in H_0^{2+\gamma}(\Gamma_T)$, $\gamma \in (0,1)$.
Assume the conditions 
\eqref{AIJ21}, \eqref{IBCBD}--\eqref{FCB}. Then,  there exists
$\gamma_1 \in (0,\gamma)$, such that
$$
  u_2 \in C^{2+\gamma_1,1+ \gamma_1/2}(Q^\ve_{T_{\rm{max}}}
  \cup \Gamma^\ve_{T_{\rm{max}}})
  \quad \text{and} \quad |u_2|_{\gamma_1}^{Q^\ve_{T_{\rm{max}}}} \leq
  C_2,
$$
where $C_2$ is independent of $t < T_{\rm{max}}$.
\end{lemma}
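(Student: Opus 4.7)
The plan is to exploit a crucial decoupling. Under the structural conditions \eqref{AIJ21}, \eqref{COND1}, and \eqref{COND2}, both $A_{22}^{jk}$ and $\varphi_2^j$ depend only on $(\mathbf{x}, u_2)$, so the second equation of \eqref{PSPZERO} reduces to the scalar quasilinear parabolic equation
\begin{equation*}
  \partial_t u_2 + \divx \varphi_2(\mathbf{x}, u_2)
  = \divx\!\big(A_{22}(\mathbf{x}, u_2)\, \nablax u_2\big) + g_2(\mathbf{x}, \mathbf{u}),
\end{equation*}
in which $u_1$ enters only through the uniformly bounded forcing $g_2$ (bounded thanks to the invariant region $B_\Delta$ from Lemma \ref{PIR} and Corollary \ref{CORIR}). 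In addition, Lemma \ref{MainLemma} provides $u_2 \in V^{1,0}_2(Q_{T_{\rm{max}}})$ with a bound independent of $T_{\rm{max}}$.

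Starting from this, I would first produce an initial H\"older exponent by applying the De Giorgi--Nash--Moser theory in the form stated for bounded weak solutions of uniformly parabolic scalar equations in divergence form (see Chapter V of \cite{OALVASNNU}). This yields some $\gamma_0 \in (0,1)$ and a constant depending only on $\la_0$, the $C^2$-norms of $A_{22}$, $\varphi_2$, $g_2$ on $\overline{\Omega} \times B_\Delta$, and on $\ve$, such that $|u_2|_{\gamma_0}^{Q^\ve_{T_{\rm{max}}}} \le C$. Near the lateral boundary, the smoothness $u_{b_2} \in H_0^{2+\gamma}(\Gamma_T)$ together with standard boundary H\"older estimates (via flattening of $\Gamma$ and barriers) extend the estimate up to $\Gamma^\ve_{T_{\rm{max}}}$. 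Once this is in hand, the composed coefficients $A_{22}^{jk}(\mathbf{x}, u_2)$ and $\varphi_2^j(\mathbf{x}, u_2)$ are H\"older continuous in $(t,\mathbf{x})$, so I would view the equation as linear in $u_2$ and bootstrap: first $L^p$ parabolic (Calder\'on--Zygmund) estimates give $u_2 \in W^{2,1}_p$ for large $p$, hence $u_2 \in C^{1+\gamma', (1+\gamma')/2}$ by parabolic embedding, and then linear Schauder theory in the weighted H\"older classes of Section \ref{SECLET} yields the claimed $u_2 \in C^{2+\gamma_1, 1+\gamma_1/2}$ on $Q^\ve_{T_{\rm{max}}} \cup \Gamma^\ve_{T_{\rm{max}}}$, with constants independent of $t < T_{\rm{max}}$.

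The main obstacle is the first step. The right-hand side $g_2(\mathbf{x}, \mathbf{u})$ inherits no regularity beyond $L^\infty$ because $u_1$ is known only to lie in $V^{1,0}_2$; in particular, Schauder theory cannot be invoked directly. The decoupling granted by \eqref{COND1}--\eqref{COND2} is precisely what makes the strategy work: it allows the equation for $u_2$ to be treated as a genuine scalar quasilinear parabolic equation with a merely bounded forcing, for which De Giorgi--Nash--Moser type estimates apply regardless of the lack of regularity of $u_1$. The subsequent bootstrap from $C^{\gamma_0}$ to $C^{2+\gamma_1,1+\gamma_1/2}$ is then routine linear parabolic theory.
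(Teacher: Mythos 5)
Your core strategy --- combine the $L^\infty$ bound from the invariant region with the $V^{1,0}_2$ energy estimate of Lemma \ref{MainLemma}, then run a De Giorgi--Nash--Moser type H\"older estimate for a divergence-form parabolic equation --- is exactly the paper's. The one substantive divergence is that you invoke the decoupling conditions \eqref{COND1} and \eqref{COND2}, which are \emph{not} hypotheses of this lemma (they enter only later, in Proposition \ref{LCONGU21D} and Lemma \ref{LCONGU22D}). You assert that the decoupling is ``precisely what makes the strategy work,'' but it is not needed for the H\"older bound: the paper simply freezes the solution into the coefficients, i.e.\ treats the equation for $u_2$ as a \emph{linear} divergence-form equation with bounded measurable leading coefficients $a_{jk}(t,\mathbf{x})=A^{jk}_{22}(\mathbf{x},\mathbf{u}(t,\mathbf{x}))$ and bounded lower-order data $f_j=\varphi^j_2(\mathbf{x},\mathbf{u})$, $f=g_2(\mathbf{x},\mathbf{u})$, and applies Theorem 10.1 of Chapter III of \cite{OALVASNNU}. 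That theorem requires only uniform ellipticity (guaranteed by \eqref{REG1} together with \eqref{AIJ21}) and a bound $\|f_j^2,f\|_{q,r}\le C$ with $\frac{1}{r}+\frac{d}{2q}=1-\kappa_1$, both of which hold whether or not $A_{22}$ and $\varphi_2$ depend on $u_1$. So, as written, your argument establishes the lemma only under strictly stronger hypotheses; to prove the stated result you should drop \eqref{COND1}--\eqref{COND2} and pass to the linear formulation.

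A second, smaller point: your Schauder bootstrap to $C^{2+\gamma_1,1+\gamma_1/2}$ requires the forcing $g_2(\mathbf{x},u_1,u_2)$ to be H\"older continuous in $(t,\mathbf{x})$, hence requires H\"older continuity of $u_1$, which is not available at this stage (it is established only after the estimates on $u_2$ are complete). This is harmless for the lemma as stated, because the uniform-in-$t$ bound is claimed only for the $C^{\gamma_1}$ seminorm; the membership $u_2\in C^{2+\gamma_1,1+\gamma_1/2}(Q^\ve_{T_{\rm{max}}}\cup\Gamma^\ve_{T_{\rm{max}}})$ is qualitative and already follows from local well-posedness ($\mathbf{u}\in H_0^{2+\gamma}(\ol{Q}_T)$ for every $T<T_{\rm{max}}$). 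But you should not present the quantitative bootstrap as if its constants were independent of $t<T_{\rm{max}}$.
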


\begin{proof}
First, from Lemma \ref{MainLemma}, we have $u_2 \in V^{1,0}_2(Q_{T_{\rm{max}}})$.
The thesis follows applying Theorem 10.1 in Chapter III (linear theory) from
Ladyzenskaja, Solonikov and Ural'ceva \cite{OALVASNNU}, with
$$
  a_{jk}(t,\mathbf{x})=
  A_{22}^{jk}(\mathbf{x},u_1(t,\mathbf{x}),u_2(t,\mathbf{x})), \quad
  f_j(t,\mathbf{x})=
  \vp^j_2(\mathbf{x},u_1(t,\mathbf{x}),u_2(t,\mathbf{x})),
$$
also
$$
  f(t,\mathbf{x})=
  g_2(\mathbf{x},u_1(t,\mathbf{x}),u_2(t,\mathbf{x})),\quad a_j=
  b_j= a= 0.
$$
Since $\mathbf{u} \in B_\Delta$, and $f,f_j$ are uniformly
bounded, for each $d \geq 1$, there exist positive $q$ and $r$, such that
$$
  \frac{1}{r}+\frac{d}{2q}= 1 - \kappa_1, \quad \kappa_1 \in
  (0,1)
$$
and $\|f_j^2,f\|_{q,r,Q_{T_{\rm{max}}}}\leq C$, where $C$ is a
positive constant independent of $t < T_{\rm{max}}$.
\end{proof}

\begin{proposition}
\label{LCONGU21D}  Under conditions of Lemma \ref{LCONU2D}, and also condition \eqref{COND1}, 
then for $t < T_{\rm{max}}$ and $j= 1, \ldots, d$
$$
  \int_\ve^t\!\!\!\int_\Omega |\partial_{x_j} u_2(\tau,\mathbf{x})|^{4} \,
  d\mathbf{x}d\tau \leq C,
$$
where $C$ does not dependent on $t$.
\end{proposition}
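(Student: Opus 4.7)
The strategy is to regard the equation \eqref{PSPZERO} for $\al=2$ as a linear divergence-form parabolic equation with H\"{o}lder continuous leading coefficients (exploiting \eqref{COND1} together with Lemma \ref{LCONU2D}), derive a second-order energy estimate by testing against $\partial_t u_2$, and finally close the estimate via a Gagliardo--Nirenberg interpolation.

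Under \eqref{COND1} the coefficient $a_{jk}(t,\mathbf{x}):=A_{22}^{jk}(\mathbf{x},u_2(t,\mathbf{x}))$ depends only on $\mathbf{x}$ and $u_2$, and by Lemma \ref{LCONU2D} it is uniformly H\"{o}lder continuous on $Q^\ve_{T_{\rm{max}}}\cup\Gamma^\ve_{T_{\rm{max}}}$, with uniform ellipticity constant $\la_0$ from \eqref{REG1}. The equation \eqref{PSPZERO} for $\al=2$ therefore reads
\[
   \partial_t u_2 - \partial_{x_j}(a_{jk}\,\partial_{x_k} u_2)
   = -\partial_{x_j}\vp_2^j(\mathbf{x},\mathbf{u}) + g_2(\mathbf{x},\mathbf{u}),
\]
whose right-hand side is bounded in $L^2(Q^\ve_{T_{\rm{max}}})$: by Corollary \ref{CORIR} one has $\mathbf{u}\in B_\Delta$, by Lemma \ref{MainLemma} one has $\nabla\mathbf{u}\in L^2$, and $\vp_2,g_2\in C^2$ by \eqref{REG2}.

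The central step is to multiply the above equation by $\partial_t u_2$ and integrate over $\Omega$. The principal term yields, after integration by parts,
\[
   \tfrac{1}{2}\tfrac{d}{dt}\!\int_\Omega a_{jk}\partial_{x_j}u_2\,\partial_{x_k}u_2\,d\mathbf{x}
   \;-\; \tfrac{1}{2}\!\int_\Omega (\partial_{u_2}A_{22}^{jk})\,\partial_t u_2\,\partial_{x_j}u_2\,\partial_{x_k}u_2\,d\mathbf{x} + (\text{boundary}),
\]
and the cubic remainder is controlled by Young's inequality through $\delta\|\partial_t u_2\|_{L^2(\Omega)}^2+C_\delta\|\nabla u_2\|_{L^4(\Omega)}^4$. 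Boundary contributions are handled through the lift of $u_{b_2}\in H_0^{2+\gamma}(\Gamma_T)$ for the Dirichlet case, and through \eqref{FCB} (zero-flux boundary condition) for the flux case. Integrating in time from $\ve$ to $t$, absorbing $\delta\|\partial_t u_2\|^2$ into the left-hand side, and using Lemma \ref{MainLemma} to control the slice at $t=\ve$, we arrive at the estimate
\[
   \sup_{\tau\in[\ve,t]}\!\int_\Omega|\nabla u_2(\tau)|^2\,d\mathbf{x}
   \;+\; \int_\ve^t\!\!\!\int_\Omega|\partial_t u_2|^2\,d\mathbf{x}\,d\tau
   \;\leq\; C_1 + C_2\!\int_\ve^t\!\!\!\int_\Omega|\nabla u_2|^4\,d\mathbf{x}\,d\tau,
\]
uniformly in $t<T_{\rm{max}}$. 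Combining this with a Calder\'on--Zygmund $L^2$-estimate for the H\"{o}lder-coefficient elliptic operator $v\mapsto \partial_{x_j}(a_{jk}\partial_{x_k}v)$ yields a uniform bound on $\|D^2 u_2\|_{L^2(Q^\ve_{T_{\rm{max}}})}$ in the same quantities, and the Gagliardo--Nirenberg interpolation
\[
   \|\nabla u_2(\tau)\|_{L^4(\Omega)}^4 \leq C\,\|u_2(\tau)\|_{L^\infty(\Omega)}^2\,\|D^2 u_2(\tau)\|_{L^2(\Omega)}^2 + (\text{l.o.t.}),
\]
together with $\|u_2\|_{L^\infty}\leq 1$ from $\mathbf{u}\in B_\Delta$, closes the loop and delivers the desired estimate for $j=1,\ldots,d$.

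\textbf{Main obstacle.} The critical analytic point is the absorption of the self-referential $\|\nabla u_2\|_{L^4}^4$ term on the right-hand side of the energy inequality. To guarantee that the composite constant (the product of the Gagliardo--Nirenberg constant, the Calder\'on--Zygmund constant, and $\|u_2\|_{L^\infty}^2$) is strictly less than one, one typically subdivides $[\ve,t]$ into sufficiently short subintervals and invokes the uniform continuity of $u_2$ in time from Lemma \ref{LCONU2D} to make the effective oscillation of $u_2$ on each subinterval as small as needed. A secondary technical issue is the careful treatment of the boundary term arising from integration by parts, which in the flux case relies essentially on \eqref{FCB}.
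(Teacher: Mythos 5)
Your overall architecture parallels the paper's: a second-order energy estimate for the $\al=2$ equation, followed by an interpolation inequality that controls $\iint|\nablax u_2|^4$ by $\iint|D^2_{\mathbf{x}}u_2|^2$ times a factor that must be made small enough to absorb. (The paper tests with $\partial_{x_l}\bigl(\zeta^2\,\partial_{x_l}u_2\bigr)$ rather than $\partial_t u_2$, which produces $\iint|D^2_{\mathbf{x}}u_2|^2\zeta^2$ directly on the left and avoids your extra Calder\'on--Zygmund step; that difference is a legitimate variant.) The gap is in the absorption step, which you correctly identify as the main obstacle but then resolve incorrectly.

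The constant in front of the self-referential term is, in your scheme, $C_{GN}\cdot C_{CZ}\cdot\|u_2(\tau)\|_{L^\infty(\Omega)}^2$, and $\|u_2\|_{L^\infty}\le 1$ gives no smallness whatsoever: this product has no reason to be less than the ellipticity constant on the left. Your proposed remedy --- subdividing $[\ve,t]$ into short time intervals and invoking uniform continuity of $u_2$ \emph{in time} --- does not work, because the Gagliardo--Nirenberg inequality is applied on each fixed time slice and its constant involves the \emph{spatial} $L^\infty$ norm (equivalently, after subtracting a constant, the spatial oscillation of $u_2(\tau,\cdot)$ over the domain of integration) at that fixed time. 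Shrinking the time interval leaves this quantity unchanged. The smallness must come from \emph{spatial} localization: the paper works on $\Omega_{2\rho}=B_{2\rho}(\mathbf{x}_0)\cap\Omega$ with a cutoff $\zeta$, applies Lemma 5.4 of Chapter II of Ladyzenskaja--Solonnikov--Ural'ceva, whose constant is $16\,{\rm osc}^2[u_2,\Omega_{2\rho}]$, and then uses the H\"older estimate of Lemma \ref{LCONU2D} to get ${\rm osc}^2[u_2,\Omega_{2\rho}]\le C\rho^{2\gamma_1}$, so that choosing $\rho\le\rho_0$ small makes the absorption possible; the global bound then follows from a finite cover of $\ol{\Omega}$ and a partition of unity. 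Without this spatial localization (or some substitute source of smallness), your loop does not close, so as written the proof is incomplete at its decisive point.
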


\begin{proof}
1. First, we consider without loss of generality that, $u_{b2}= 0$. Otherwise, we proceed exactly as in the proof of Lemma
\ref{MainLemma}. For $\rho>0$ and any $\mathbf{x}_0 \in \Omega$, let $\Omega_{2\rho}= B_{2 \rho}(\mathbf{x_0}) \cap \Omega$.
 Let $0 \leq \zeta(t,\mathbf{x}) \leq 1$ be a smooth function,
such that, for each $t \in (\varepsilon, T_{\rm{max}})$, $0 < \varepsilon < T_{\rm{max}}$, 
$\zeta(t,\mathbf{x}) \equiv 0$ in  $(\varepsilon, T_{\rm{max}}) \times (\Omega \setminus B_{2\rho})\bigcup((0,\varepsilon) \times \Omega)$
and $\zeta(t,\mathbf{x}) \equiv 1$ in  $(\varepsilon, T_{\rm{max}}) \times \Omega_{\rho}$, with the obvious notation.

\medskip
2. For $\alpha= 2$, we multiply equation \eqref{PSPZERO} by
$$
    \frac{\partial}{\partial x_l} \big( \frac{\partial}{\partial x_l} u_2  \; \zeta^2 \big),
$$
and integrating in $(\varepsilon, t) \times \Omega_{2 \rho}$, it follows that
$$
    \begin{aligned}
    0= \int_\varepsilon^t \!\!\! \int_{\Omega_{2 \rho}} &\Big( - \partial_t u_2 +  \frac{\partial}{\partial x_j}
    \big( A_{22}^{jk} \; \frac{\partial u_2}{\partial x_k} - \varphi_2^j \big) + g_2 \Big)
    \frac{\partial}{\partial x_l} \big( \frac{\partial}{\partial x_l} u_2  \; \zeta^2 \big) \; d\mathbf{x} dt
    \\[5pt]
    &=  \int_\varepsilon^t \!\!\! \int_{\Omega_{2 \rho}} {u_2}_{tx_l} \; {u_2}_{x_l} \; \zeta^2  \; d\mathbf{x} dt
    \\[5pt]
    &+ \int_\varepsilon^t \!\!\! \int_{\Omega_{2 \rho}}  \frac{\partial}{\partial x_l}
    \big( A_{22}^{jk} \;  {u_2}_{ x_k}  \big) \; \frac{\partial}{\partial x_j}  ({u_2}_{x_l}  \; \zeta^2)  \; d\mathbf{x} dt
    \\[5pt]
    &- \int_\varepsilon^t \!\!\! \int_{\Omega_{2 \rho}}  \frac{\partial}{\partial x_l}
    \big( \varphi_2^j \big) \; \frac{\partial}{\partial x_j}  ({u_2}_{x_l}  \; \zeta^2)  \; d\mathbf{x} dt
    + \int_\varepsilon^t \!\!\! \int_{\Omega_{2 \rho}}  g_2  \; \frac{\partial}{\partial x_l}  ({u_2}_{x_l}  \; \zeta^2)  \; d\mathbf{x} dt
    \\[5pt]
    & =: I_1 + I_2 - I_3 + I_4.
    \end{aligned}
$$
Then, we have
\begin{equation}
\label{I1}
   I_1= \Big(  \frac{1}{2} \int_{\Omega_{2 \rho}}
   {u_2}^2_{x_l}  \zeta^2 \, d\mathbf{x} \Big)\Big|_\varepsilon^t
   -\int_\varepsilon^t \!\!\! \int_{\Omega_{2 \rho}} {u_2}^2_{x_l}  \zeta \zeta_t \;  d\mathbf{x} dt.
\end{equation}
Moreover, from $I_2$ we obtain
$$
\begin{aligned}
   I_2&= \int_\varepsilon^t \!\!\! \int_{\Omega_{2 \rho}}  A_{22}^{jk} \; \frac{\partial^2 u_2}{\partial x_k \partial x_l}
    \; \frac{\partial^2 u_2}{\partial x_l \partial x_j}  \zeta^2  d\mathbf{x} dt + \int_\varepsilon^t \!\!\! \int_{\Omega_{2 \rho}} 
    A_{22}^{jk} \; \frac{\partial^2 u_2}{\partial x_k \partial x_l}
    \;  {u_2}_{x_l} 2 \zeta {\zeta}_{x_j} d\mathbf{x} dt
    \\[5pt]
    & + \int_\varepsilon^t \!\!\! \int_{\Omega_{2 \rho}} \frac{\partial A_{22}^{jk}}{\partial u_2} \;  {u_2}_{x_l}
    \; {u_2}_{x_k} \;  \frac{\partial^2 u_2}{\partial x_l \partial x_j} \zeta^2 d\mathbf{x} dt
    \\[5pt]
    &+ \int_\varepsilon^t \!\!\! \int_{\Omega_{2 \rho}} \frac{\partial A_{22}^{jk}}{\partial u_2} \;  {u_2}_{x_l}
    \; {u_2}_{x_k} \;  {u_2}_{x_l} 2 \zeta {\zeta}_{x_j} d\mathbf{x} dt
    \\[5pt]
    & + \int_\varepsilon^t \!\!\! \int_{\Omega_{2 \rho}} 
    \frac{\partial A_{22}^{jk}}{\partial x_l} \;  {u_2}_{x_k} \;  \frac{\partial^2 u_2}{\partial x_l \partial x_j} \; \zeta^2
    + \int_\varepsilon^t \!\!\! \int_{\Omega_{2 \rho}} \frac{\partial A_{22}^{jk}}{\partial x_l} \;  
     {u_2}_{x_k} \;  {u_2}_{x_l} \; 2 \zeta {\zeta}_{x_j}  \, d\mathbf{x} dt
\\[5pt]
  &=: J_1 + K_1 + K_2 + K_3 + K_4 + K_5.
\end{aligned}
$$
Denoting by $D^2_{\mathbf{x}} \,  u_2$ the Hessian of the function $u_2$, we consider the following estimates:
$$
\begin{aligned}
    J_1 &\geq \lambda_0^2 \int_\varepsilon^t \!\!\! \int_{\Omega_{2 \rho}} |D^2_{\mathbf{x}} u_2|^2 \; \zeta^2 \, d\mathbf{x} dt,
\\[7pt]
   |K_1| &\leq \varepsilon_1 \int_\varepsilon^t \!\!\! \int_{\Omega_{2 \rho}} |D^2_{\mathbf{x}} u_2|^2 \; \zeta^2 \, d\mathbf{x} dt + C_1,
\\[7pt]
  |K_2| &\leq \varepsilon_2 \int_\varepsilon^t \!\!\! \int_{\Omega_{2 \rho}} |D^2_{\mathbf{x}} u_2|^2 \; \zeta^2 \, d\mathbf{x} dt
    + C_2  \int_\varepsilon^t \!\!\! \int_{\Omega_{2 \rho}} |\nablax u_2|^4 \, d\mathbf{x} dt ,
\\[7pt]
    |K_3| &\leq  \int_\varepsilon^t \!\!\! \int_{\Omega_{2 \rho}} |\nablax u_2|^4 \; \zeta^2 \, d\mathbf{x} dt  + C_3 ,
\\[7pt]
    |K_4| &\leq \varepsilon_4 \int_\varepsilon^t \!\!\! \int_{\Omega_{2 \rho}} |D^2_{\mathbf{x}} u_2|^2 \; \zeta^2 \, d\mathbf{x} dt  + C_4,
\quad
  |K_5|  \leq C_5,
  \end{aligned}
$$
where we have used Lemma \ref{MainLemma}, the condition \eqref{REG1} and the generalized Young's inequality. Therefore, from the above
estimates
\begin{equation}
  \begin{aligned}
       I_1 + J_1 &\leq  |I_3| + |I_4| + \sum_{\iota= 1}^5 |K_\iota| \leq  |I_3| + |I_4| 
\\
                          &+  \varepsilon_6 \int_\varepsilon^t \!\!\! \int_{\Omega_{2 \rho}} |D^2_\mathbf{x} u_2|^2 \; \zeta^2 \, d\mathbf{x} dt
                          +  C_6  \int_\varepsilon^t \!\!\! \int_{\Omega_{2 \rho}} |\nablax u_2|^4  \; \zeta^2 \, d\mathbf{x} dt + C_7.
      \end{aligned}
\end{equation}
Now, we proceed to estimate $I_3$, first
$$
    \begin{aligned}
    I_3 =  \int_\varepsilon^t \!\!\! \int_{\Omega_{2 \rho}} &\Big(\frac{\partial \varphi_2^j}{\partial u_1} \; {u_1}_{x_l}
    \; \frac{\partial^2 u_2}{\partial x_l \partial x_j} \; \zeta^2
    + \frac{\partial \varphi_2^j}{\partial u_1} \; {u_1}_{x_l}
    \; {u_2}_{x_l} \; 2 \zeta \zeta_{x_j}
    \\[5pt]
    &+ \frac{\partial \varphi_2^j}{\partial u_2} \; {u_2}_{x_l}
    \; \frac{\partial^2 u_2}{\partial x_l \partial x_j} \; \zeta^2
    + \frac{\partial \varphi_2^j}{\partial u_2} \; {u_2}_{x_l}
    \; {u_2}_{x_l} \; 2 \zeta \zeta_{x_j}
    \\[5pt]
    &+ \frac{\partial \varphi_2^j}{\partial x_l}
    \; \frac{\partial^2 u_2}{\partial x_l \partial x_j} \; \zeta^2
    + \frac{\partial \varphi_2^j}{\partial x_l}
    \; {u_2}_{x_l} \; 2 \zeta \zeta_{x_j})
     \Big)\, d\mathbf{x} dt,  
      \end{aligned}
$$
then we consider the following estimates:
$$
  \begin{aligned}
   &\int_\varepsilon^t \!\!\! \int_{\Omega_{2 \rho}}  \frac{\partial \varphi_2^j}{\partial u_1} \; {u_1}_{x_l}
    \; \frac{\partial^2 u_2}{\partial x_l \partial x_j} \; \zeta^2  \, d\mathbf{x} dt
    \leq \varepsilon_8 \int_\varepsilon^t \!\!\! \int_{\Omega_{2 \rho}} |D^2_{\mathbf{x}} u_2|^2 \; \zeta^2 \, d\mathbf{x} dt  + C_8,
\\[7pt]
   &\int_\varepsilon^t \!\!\! \int_{\Omega_{2 \rho}}  \frac{\partial \varphi_2^j}{\partial u_2} \; {u_2}_{x_l}
    \; \frac{\partial^2 u_2}{\partial x_l \partial x_j} \; \zeta^2  \, d\mathbf{x} dt
    \leq \varepsilon_9 \int_\varepsilon^t \!\!\! \int_{\Omega_{2 \rho}} |D^2_{\mathbf{x}} u_2|^2 \; \zeta^2 \, d\mathbf{x} dt  + C_9,
\\[7pt]
   &\int_\varepsilon^t \!\!\! \int_{\Omega_{2 \rho}}  \frac{\partial \varphi_2^j}{\partial x_l}
    \; \frac{\partial^2 u_2}{\partial x_l \partial x_j} \; \zeta^2  \, d\mathbf{x} dt
    \leq \varepsilon_{10} \int_\varepsilon^t \!\!\! \int_{\Omega_{2 \rho}} |D^2_{\mathbf{x}} u_2|^2 \; \zeta^2 \, d\mathbf{x} dt  + C_{10},
\\[7pt]
   &\int_\varepsilon^t \!\!\! \int_{\Omega_{2 \rho}} \big( \frac{\partial \varphi_2^j}{\partial u_1} \; {u_1}_{x_l} +
   \frac{\partial \varphi_2^j}{\partial u_2} \; {u_2}_{x_l} \big)
    \; {u_2}_{x_l} \; 2 \zeta \zeta_{x_j}  \, d\mathbf{x} dt
    \leq  C_{11}.
\end{aligned}
$$
Hence we conclude that
\begin{equation}
\label{I3}
   |I_3| \leq \varepsilon_{12} \int_\varepsilon^t \!\!\! \int_{\Omega_{2 \rho}} |D^2_{\mathbf{x}} u_2|^2 \; \zeta^2 \, d\mathbf{x} dt  + C_{12}.
\end{equation}
Finally, we easily have
\begin{equation}
\label{I4} |I_4| \leq C_{13}.
\end{equation}
Taking  $\varepsilon_6 + \varepsilon_{12} \leq \lambda_0^2 / 4$, we obtain from \eqref{I1}--\eqref{I4}
\begin{equation}
\label{FINALEST}
\begin{aligned}
    \Big(  \frac{1}{2} \int_{\Omega_{2 \rho}}
   {u_2}^2_{x_l}  \zeta^2 \, d\mathbf{x} \Big)\Big|_\varepsilon^t
   &+ \frac{\lambda_0^2}{4} \int_\varepsilon^t \!\!\! \int_{\Omega_{2 \rho}} |D^2_{\mathbf{x}} u_2|^2 \; \zeta^2 \, d\mathbf{x} dt
   \\[5pt]
   &\leq C_{14} \int_\varepsilon^t \!\!\! \int_{\Omega_{2 \rho}} |\nablax u_2|^4 \; \zeta^2 \, d\mathbf{x} dt + C_{15}.
   \end{aligned}
\end{equation}

\medskip
3. Now, let us consider  Lemma 5.4 in Chapter II, from Ladyzenskaja, Solonikov and Ural'ceva
\cite{OALVASNNU}. Taking $s= 1$, we have
\begin{equation}
\label{LUSLEMMA54}
\begin{aligned}
   \int_{\Omega_{2 \rho}}  |\nablax u_2|^4 \; \zeta^2 \, d\mathbf{x}   \leq 16 \; {\rm osc}^2[u_2,\Omega_{2 \rho}] 
   \int_{\Omega_{2 \rho}}  C (|D^2_{\mathbf{x}} u_2|^2 \; \zeta^2 \,+  |\nablax u_2|^2 ) d\mathbf{x}.
\end{aligned}
\end{equation}
We recall that, $\rm{osc}[u(\mathbf{x});\Omega]$ is the oscillation of $u(\mathbf{x})$ in $\Omega$, which means the difference 
between $\rm{ess}\sup_{\Omega} u(\mathbf{x})$ and $\rm{ess}\inf_{\Omega} u(\mathbf{x})$, therefore, 
it follows from Lemma 3.6 that,  
$$
    16 \; \rm{osc}^2[u_2,\Omega_{2 \rho}]\leq C_{16} \; \rho^{2 \gamma_1}.
$$ 
Then, for $\rho \leq \rho_0$, such that, 
$\ \rho^{2 \gamma_1}_0 \, C_{16} \, C_{14} \, C \leq \lambda_0^2/8$, we obtain from \eqref{FINALEST}, \eqref{LUSLEMMA54}
$$
\Big(   \int_{\Omega_{2 \rho}}
  {u_2}^2_{x_l}  \zeta^2 \, d\mathbf{x} \Big)\Big|_\varepsilon^t
   +  \int_\varepsilon^t \!\!\! \int_{\Omega_{2 \rho}} |D^2_{\mathbf{x}} u_2|^2 \; \zeta^2 \, d\mathbf{x} dt
   \leq C_{17}  
$$
and 
$$
    \max_{t_1 \in (\varepsilon,t)} \int_{\Omega_{2 \rho}}
   {u_2}^2_{x_l}  \zeta^2 \, d\mathbf{x} (t_1)
  +  \int_\varepsilon^t \!\!\! \int_{\Omega_{2 \rho}} |D^2_{\mathbf{x}} u_2|^2 \; \zeta^2 \, d\mathbf{x} dt    
  +\int_\varepsilon^t \!\!\! \int_{\Omega_{2 \rho}} |\nablax u_2|^4 \; \zeta^2 \, d\mathbf{x} dt \leq  C_{18}.
$$
From this estimate and equation \eqref{PSPZERO}  $(\alpha= 2)$, we obtain 
$$
     \int_\varepsilon^t\!\!\!\int_{\Omega_{2 \rho}}(u_{2t})^2 \zeta^2 d\mathbf{x} dt \leq  C_{19}.
$$
Consequently, it follows from the above estimates 
\begin{equation}
\label{FINAL}
\begin{aligned}
  \max_{t_1 \in(\varepsilon,t)} \int_{\Omega_\rho}
   {u_2}^2_{x_l}  \zeta^2 \, d\mathbf{x} (t_1)
   &+  \int_\varepsilon^t \!\!\! \int_{\Omega_\rho} |D^2_{\mathbf{x}} u_2|^2 \; \zeta^2 \, d\mathbf{x} dt
   \\[5pt]
     &+\int_\varepsilon^t \!\!\! \int_{\Omega_\rho} |\nablax u_2|^4 \; \zeta^2 \, d\mathbf{x} dt 
     + \int_\varepsilon^t\!\!\!\int_{\Omega_{2 \rho}}(u_{2t})^2 \zeta^2 d\mathbf{x} dt \leq  C_{20}.
\end{aligned}
\end{equation} 

\medskip
4. Finally, the thesis of the Lemma is proved from estimate \eqref{FINAL} and a
standard argument of partition of unity subordinated to a finite local cover of $\Omega$, since
$\bar{\Omega}$ is a compact subset of $\R^d$.  
\end{proof}

\begin{lemma}
\label{LCONGU22D} For $d \geq 2$, $j=1, \ldots,d$, under conditions 
of Lemma \ref{LCONU2D}, and also conditions \eqref{COND1},
\eqref{COND2},
then for $t < T_{\rm{max}}$
$$
  \int_\ve^t\!\!\!\int_\Omega |\partial_{x_j} u_2(\tau,\mathbf{x})|^{2s+4} \,
  d\mathbf{x}d\tau \leq C,
$$
where $C$ does not dependent on $t$, and $s$ is any
non-negative integer, such that, $s > (d-2) / 2$.
\end{lemma}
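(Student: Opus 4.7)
The strategy lifts the argument of Proposition \ref{LCONGU21D} from the fourth power to the $(2s+4)$th power of the gradient. Fix $\mathbf{x}_0 \in \Omega$, $\rho > 0$ small, and let $\zeta$ be the same space-time cutoff. I would multiply equation \eqref{PSPZERO} with $\alpha = 2$ by
$$
  \frac{\partial}{\partial x_l}\!\big(|\nablax u_2|^{2s}\, {u_2}_{x_l}\, \zeta^2\big),
$$
(summed in $l$) and integrate over $(\varepsilon,t)\times\Omega_{2\rho}$. Integration by parts against $\partial_t u_2$ and against $\divx(A_{22}\nablax u_2)$ produces, by \eqref{REG1}, the two positive contributions
$$
  \Big[\tfrac{1}{2s+2}\!\int_{\Omega_{2\rho}}\!|\nablax u_2|^{2s+2}\zeta^2\, d\mathbf{x}\Big]_{\varepsilon}^{t}
  + \lambda_0\!\int_\varepsilon^t\!\!\!\int_{\Omega_{2\rho}}\!|\nablax u_2|^{2s}\,|D^2_{\mathbf{x}} u_2|^2\,\zeta^2\, d\mathbf{x}\, d\tau,
$$
plus harmless lower-order terms with factors $\zeta\,\zeta_t$ and $\zeta\,\nablax\zeta$.

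Next I would classify the error terms produced by $\divx(A_{22}\nablax u_2)$, $\divx \varphi_2$, and $g_2$, following the pattern $J_1,K_1,\dots,K_5$ and $I_3$ in Proposition \ref{LCONGU21D}. The essential structural point is that \eqref{COND1} eliminates every term proportional to $(\partial A_{22}^{jk}/\partial u_1)\,{u_1}_{x_l}$, while \eqref{COND2} eliminates the single term
$$
  \int_\varepsilon^t\!\!\!\int_{\Omega_{2\rho}}\!\frac{\partial \varphi_2^j}{\partial u_1}\,{u_1}_{x_l}\,|\nablax u_2|^{2s}\,{u_2}_{x_l x_j}\,\zeta^2\, d\mathbf{x}\, d\tau,
$$
for which no useful bound on $\nablax u_1$ is available at this order. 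Every surviving term then involves only factors $u_2$, $\nablax u_2$, $D^2_{\mathbf{x}} u_2$. Using \eqref{REG1}--\eqref{REG2}, the $L^\infty$ bound on $\mathbf{u}$ from Lemma \ref{PIR} and Corollary \ref{CORIR}, and the generalized Young inequality
$$
  |\nablax u_2|^{2s+2}\,|D^2_{\mathbf{x}} u_2|\,\zeta^2 \leq \delta\,|\nablax u_2|^{2s}\,|D^2_{\mathbf{x}} u_2|^2\,\zeta^2 + C_\delta\,|\nablax u_2|^{2s+4}\,\zeta^2,
$$
each such term is dominated by $\delta \int_\varepsilon^t\!\int_{\Omega_{2\rho}} |\nablax u_2|^{2s}|D^2_{\mathbf{x}} u_2|^2\zeta^2 + C\int_\varepsilon^t\!\int_{\Omega_{2\rho}} |\nablax u_2|^{2s+4}\zeta^2$ plus a constant. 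The terms that do not carry $D^2 u_2$ (coming from $g_2$ and from derivatives of $\zeta$) are controlled by the $L^2$ estimate of Lemma \ref{MainLemma} and, inductively, by the $L^{2s+2}$ estimate supplied at the previous step (with base case $s=0$ furnished by Proposition \ref{LCONGU21D}).

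The absorption step rests on a generalized form of Lemma 5.4, Chap.~II in \cite{OALVASNNU}, applied to $w := |\nablax u_2|^{s+2}$. Using $|\nablax w|^2 \leq (s+2)^2\,|\nablax u_2|^{2s+2}\,|D^2_{\mathbf{x}} u_2|^2$ together with the Sobolev--Gagliardo--Nirenberg embedding at the critical exponent, and the H\"older oscillation bound $\operatorname{osc}^2[u_2;\Omega_{2\rho}] \leq C\,\rho^{2\gamma_1}$ from Lemma \ref{LCONU2D}, one gets
$$
  \int_{\Omega_{2\rho}}\!|\nablax u_2|^{2s+4}\,\zeta^2\, d\mathbf{x} \leq C\,\rho^{2\gamma_1}\!\int_{\Omega_{2\rho}}\!\big(|\nablax u_2|^{2s}\,|D^2_{\mathbf{x}} u_2|^2\,\zeta^2 + |\nablax u_2|^{2s+2}\big)\, d\mathbf{x}.
$$
The dimensional constraint $s > (d-2)/2$ is precisely what guarantees that the target exponent $(2s+4)/(s+2)$ lies in the admissible range for the Sobolev embedding $H^1 \hookrightarrow L^{2d/(d-2)}$ in this step; for $d=2$ any $s \geq 1$ works via Gagliardo--Nirenberg. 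Taking $\rho \leq \rho_0$ with $\rho_0^{2\gamma_1}$ small enough to beat the constant in the coercive term closes the inequality and yields local bounds on $\int_\varepsilon^t\!\int_{\Omega_\rho}|\nablax u_2|^{2s+4}\zeta^2$. A finite covering of $\ol\Omega$ by such balls and a subordinated partition of unity gives the global conclusion.

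The main obstacle is the combinatorial bookkeeping of the cross-terms produced by differentiating the weight $|\nablax u_2|^{2s}$ and the cutoff $\zeta^2$, together with verifying term-by-term that the triangularity \eqref{AIJ21} combined with \eqref{COND1} and \eqref{COND2} kills every potentially dangerous appearance of $\nablax u_1$. The induction in $s$, needed so that the $L^{2s+2}$ integral of $|\nablax u_2|$ appearing in the lower-order bounds is already controlled, is then a routine ladder starting from Proposition \ref{LCONGU21D}.
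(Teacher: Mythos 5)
Your argument is correct in substance but takes a different, self-contained route from the paper. The paper's proof is essentially a citation: since \eqref{AIJ21}, \eqref{COND1} and \eqref{COND2} make $A_{22}$ and $\varphi_2$ functions of $(\mathbf{x},u_2)$ only, the second equation is a \emph{scalar} quasilinear parabolic equation in $u_2$ (with $u_1$ entering only through the bounded zeroth-order term $g_2$), and the stated bound is exactly estimate (4.10) of Chapter V, Section 4 in \cite{OALVASNNU}, valid for any $s>0$. What you do is reconstruct that estimate by hand, testing with $\partial_{x_l}\big(|\nablax u_2|^{2s}\,{u_2}_{x_l}\,\zeta^2\big)$ and generalizing the $s=0$ computation of Proposition \ref{LCONGU21D}; this is essentially the proof inside \cite{OALVASNNU}, so the two routes agree at bottom, but yours makes explicit the term-by-term role of \eqref{COND1} and \eqref{COND2} in suppressing $\nablax u_1$, which the paper leaves implicit in the way it writes the reduced equation. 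Two caveats. First, your explanation of the restriction $s>(d-2)/2$ is off: the generalized Lemma 5.4 step is an integration by parts against the oscillation (note $(2s+4)/(s+2)=2$, so no critical Sobolev exponent enters) and needs no dimensional condition; the restriction is in fact not needed for this lemma at all --- it is needed downstream, in the subsequent lemma for $u_1$, to make $\kappa_1=1-\tfrac{1}{s+2}-\tfrac{d}{2s+4}$ positive with the choice $q=r=s+2$. Second, near $\partial\Omega$ the test function $\partial_{x_l}\big(|\nablax u_2|^{2s}{u_2}_{x_l}\zeta^2\big)$ does not vanish on $\Gamma$ under the Dirichlet condition, so the boundary portion of your covering requires the usual flattening of $\Gamma$ and separate treatment of tangential versus normal derivatives; your closing ``partition of unity'' step glosses over this, and that is precisely the part the citation to Chapter V of \cite{OALVASNNU} is doing for the paper.
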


\begin{proof}

Let us consider the second equation of our system, which is to say
$$
  \begin{aligned}
  \partial_t u_2&= \partial_{x_j} \Big(A^{jk}_{22}(\mathbf{x},u_2) \;
  \partial_{x_k}u_2 - \vp_2^j(\mathbf{x},u_2) \Big) + g_2(\mathbf{x},u_1,u_2),
\\
  u_2|_\Gamma&= u_{b_2},
\\
  u_2(0)&= u_{0_2}.
  \end{aligned}
$$
 This problem satisfies the conditions (3.1)-(3.6) in Section 3 of
Chapter V (non-linear theory) from Ladyzenskaja, Solonikov and Ural'ceva
\cite{OALVASNNU}. 
In this book is proved in Section 4 of Chapter V,  the estimates (4.10) for any $s>0$: 
$$
    \max_{\varepsilon\leq t\leq T}\int_{\Omega}|u_x|^{2s+2}dx+\int_0^T \int_{\Omega}|u_x|^{2s+4}dxdt\leq Const.
$$
The thesis of this lemma follows from the above estimate.
\end{proof}

\bigskip
Now, from the above estimates obtained for $u_2$, 
we are going to consider equation \eqref{PSPZERO}, when $\alpha= 1$.  Then, similarly to Lemma 
\ref{LCONU2D} we have the following

\begin{lemma}
Let $0<\ve< T_{\rm{max}}$ be fixed and consider for $\alpha= 1$ the
Dirichlet problem in \eqref{PSPZERO}--\eqref{BCAZERO}, with $u_{0_1} \in E$
and $u_{b_1} \in H_0^{2+\gamma}(\Gamma_T)$, $\gamma \in (0,1)$.
Assume the conditions \eqref{COND1},
\eqref{COND2} for $d \geq 2$,
\eqref{AIJ21}, \eqref{IBCBD}--\eqref{FCB}. Then, there exists $\gamma_2 >0$, such
that
\begin{equation}
\label{ESTUD1}
  |u_1|_{\gamma_2}^{Q_t^\ve} \leq C,
\end{equation}
where $C$ is a positive constant independent of $t$, with $t \in
(\ve,T_{\rm{max}})$.
\end{lemma}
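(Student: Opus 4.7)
The plan is to mimic the proof of Lemma \ref{LCONU2D}, now applied to $u_1$, by regarding the first equation of \eqref{PSPZERO} as a linear parabolic equation for $u_1$ whose principal part is $A_{11}^{jk}(\mathbf{x},\mathbf{u})$ and whose cross-diffusion term $A_{12}^{jk}(\mathbf{x},\mathbf{u})\,\partial_{x_k}u_2$ is absorbed into the inhomogeneity. Concretely, I would rewrite
$$
  \partial_t u_1 - \partial_{x_j}\bigl(A_{11}^{jk}(\mathbf{x},\mathbf{u})\,\partial_{x_k}u_1\bigr)
  = \partial_{x_j}\bigl(A_{12}^{jk}(\mathbf{x},\mathbf{u})\,\partial_{x_k}u_2 - \varphi_1^j(\mathbf{x},\mathbf{u})\bigr)
  + g_1(\mathbf{x},\mathbf{u}),
$$
set $a_{jk}(t,\mathbf{x}):=A_{11}^{jk}(\mathbf{x},\mathbf{u}(t,\mathbf{x}))$, $f_j:=A_{12}^{jk}\,\partial_{x_k}u_2-\varphi_1^j$ and $f:=g_1$, and aim to invoke Theorem 10.1 in Chapter III of \cite{OALVASNNU} exactly as was done for $u_2$ in Lemma \ref{LCONU2D}. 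By \eqref{REG1} one has $a_{jk}\xi^j\xi^k\geq\lambda_0$, and Corollary \ref{CORIR} keeps $\mathbf{u}$ in $B_\Delta$, so the principal coefficients are uniformly elliptic and bounded.

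The crucial verification is the integrability of the data: I need $\|f_j^2,f\|_{q,r,Q_{T_{\rm{max}}}^\varepsilon}$ finite for exponents with $1/r+d/(2q)=1-\kappa_1$, some $\kappa_1\in(0,1)$. Boundedness of $\varphi_1^j$ and $g_1$ handles those contributions trivially, so the only issue is to control $|A_{12}^{jk}\partial_{x_k}u_2|^2\leq C|\nabla u_2|^2$. For $d=1$, Proposition \ref{LCONGU21D} gives $\nabla u_2\in L^4(Q_{T_{\rm{max}}}^\varepsilon)$, hence $|\nabla u_2|^2\in L^2$ and $q=r=2$ satisfies $1/2+1/4=3/4<1$. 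For $d\geq 2$, Lemma \ref{LCONGU22D} supplies $\nabla u_2\in L^{2s+4}$ for any integer $s>(d-2)/2$, so $|\nabla u_2|^2\in L^{s+2}$, and taking $q=r=s+2$ gives $(d+2)/(2(s+2))<1$ precisely because of the choice of $s$.

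With this in hand, I would apply Theorem 10.1 in Chapter III of \cite{OALVASNNU} on the cylinder $Q_t^\varepsilon$. The Dirichlet data $u_{b_1}\in H_0^{2+\gamma}(\Gamma_T)$ and the initial condition $u_{0_1}\in E$ supply the necessary boundary regularity, so the theorem produces a Hölder exponent $\gamma_2\in(0,1)$ and an estimate $|u_1|_{\gamma_2}^{Q_t^\varepsilon}\leq C$. Once \eqref{ESTUD1} is obtained, the proof of the lemma is complete; the global-in-time consequence then follows from Corollary \ref{CORGS} exactly as announced at the end of Section \ref{Three-phase capillary}.

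The step I expect to be the main obstacle is keeping the constant $C$ genuinely uniform in $t<T_{\rm{max}}$. Every constant in the linear estimate must depend only on the $B_\Delta$-bound for $\mathbf{u}$, the ellipticity constant $\lambda_0$, the $C^2$-norms of the coefficient fields, and the $L^p$ norms of $\nabla u_2$ from the preceding lemmas; the decisive point is that Proposition \ref{LCONGU21D} and Lemma \ref{LCONGU22D} were already set up with bounds independent of $t<T_{\rm{max}}$, so this uniformity propagates through the linear theory. A secondary technical point is to ensure that the mixed-norm hypothesis of \cite{OALVASNNU} is verified up to the lateral boundary $\Gamma_{T_{\rm{max}}}^\varepsilon$, which requires a standard localization around $\Gamma$ together with the regularity of $u_{b_1}$.
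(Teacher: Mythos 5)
Your proposal is correct and follows essentially the same route as the paper: both recast the $u_1$-equation as a linear parabolic equation with principal coefficients $a_{jk}=A_{11}^{jk}(\mathbf{x},\mathbf{u})$, absorb the cross-diffusion term $A_{12}^{jk}\partial_{x_k}u_2$ together with $\varphi_1^j$ into the inhomogeneity $f_j$, and invoke Theorem 10.1 of Chapter III of \cite{OALVASNNU}, with the required mixed-norm bound on $f_j^2$ supplied by Proposition \ref{LCONGU21D} for $d=1$ and Lemma \ref{LCONGU22D} for $d\geq 2$ via the identical exponent computation $\kappa_1=1-\tfrac{1}{s+2}-\tfrac{d}{2s+4}>0$. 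The paper's proof is in fact terser than yours; your added remarks on the $t$-uniformity of the constants and the localization near $\Gamma$ are consistent with, and slightly more explicit than, what the authors write.
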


\begin{proof}
Again the result follows applying Theorem 10.1 in Chapter III from
Ladyzenskaja, Solonikov and Ural'ceva \cite{OALVASNNU}, with
$$
  \begin{aligned}
  a_{jk}(t,\mathbf{x})&=
  A_{11}^{jk}(\mathbf{x},u_1(t,\mathbf{x}),u_2(t,\mathbf{x})), \\
  f_j(t,\mathbf{x})&= A_{12}^{jk}(\mathbf{x},u_1(t,\mathbf{x}),u_2(t,\mathbf{x}))
  \; \partial_{x_k} u_2 +
  \vp^j_1(\mathbf{x},u_1(t,\mathbf{x}),u_2(t,\mathbf{x})),
  \end{aligned}
$$
and
$$
  f(t,\mathbf{x})=
  g_1(\mathbf{x},u_1(t,\mathbf{x}),u_2(t,\mathbf{x})),\quad a_j=
  b_j= a= 0.
$$
Since $\mathbf{u} \in B_\Delta$, and considering the
higher estimates of $\partial_{x_k} u_2$ obtained from
Lemma \ref{LCONGU21D}  (in particular for $d= 1$), Lemma \ref{LCONGU22D} ($d> 1$),
we may consider $f,f_j^2$ uniformly bounded, i.e. 
$\|f_j^2,f\|_{q,r,Q_t^\varepsilon}   \leq C$, 
with $q=s+2$, $r=s+2$,
where $C$ is a
positive constant independent of $t < T_{\rm{max}}$.
Moreover, we have
$$
  \frac{1}{r}+\frac{d}{2q}= 1 - \kappa_1 \in (0,1),
$$
where
$$
  \kappa_1= 1- \frac{1}{s+2} - \frac{d}{2s+4} > 0
$$
and therefore we obtain \eqref{ESTUD1}.
\end{proof}

\begin{theorem} \label{DIRICHLETTHM}
Let any $T>0$ be given and consider $\mathbf{u}_{0} \in \! E$,
$\mathbf{u}_{b} \in \! H_0^{2+\gamma}(\Gamma_T)$, with $\gamma \in (0,1)$.
Assume the conditions \eqref{COND1},
\eqref{AIJ21}, \eqref{IBCBD}--\eqref{FCB}, \eqref{COND2} for $d \geq 2$. Then, 
the initial-boundary value problem \eqref{PSPZERO}--\eqref{BCAZERO},
with Dirichlet condition, has a unique global solution
$\mathbf{u} \in H_0^{2+\gamma}(\ol{Q}_T)$. Moreover, for each $(t,\mathbf{x}) \in \ol{Q}_T$, 
$\mathbf{u}(t,\mathbf{x}) \in B_\Delta$.
\end{theorem}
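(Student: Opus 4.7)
The plan is to reduce the global existence claim to Corollary \ref{CORGS} via the a priori H\"older estimates established in the preceding lemmas, and then invoke Corollary \ref{CORIR} for the invariance statement.

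First, I would appeal to Theorem \ref{LOCALEX} to produce a unique local classical solution $\mathbf{u} \in H_0^{2+\gamma}(\overline{Q}_{T_0})$ defined on a maximal interval $[0,T_{\rm{max}})$. Assume, for the sake of contradiction, that $T_{\rm{max}} < T$. The goal becomes finding a H\"older exponent $\tilde{\gamma} \in (0,1)$ and a constant independent of $t \in (\varepsilon,T_{\rm{max}})$ such that $\|\mathbf{u}(t,\cdot)\|_{C^{\tilde{\gamma}}(\overline{\Omega})}$ is uniformly bounded, from which Corollary \ref{CORGS} immediately forces $T_{\rm{max}} = \infty$ \textemdash a contradiction.

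Fixing $\varepsilon \in (0,T_{\rm{max}})$, I would first invoke Lemma \ref{PIR} together with Corollary \ref{CORIR} to secure the $L^\infty$ bound $\mathbf{u}(t,\mathbf{x}) \in B_\Delta$ throughout $[0,T_{\rm{max}}) \times \overline{\Omega}$. Next, I would fetch the uniform $C^{\gamma_1}$ estimate for $u_2$ on $Q_{T_{\rm{max}}}^{\varepsilon}$ from Lemma \ref{LCONU2D}, and combine it with the uniform $C^{\gamma_2}$ estimate for $u_1$ on the same cylinder provided by the lemma immediately preceding the theorem. Setting $\tilde{\gamma} := \min\{\gamma_1,\gamma_2\}$, the orbit $\overline{D} := \overline{\{\mathbf{u}(t,\cdot) : t \in (\varepsilon,T_{\rm{max}})\}}$ is a bounded subset of $C^{\tilde{\gamma}}(\overline{\Omega})$, and the desired contradiction closes.

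Uniqueness and the regularity $\mathbf{u} \in H_0^{2+\gamma}(\overline{Q}_T)$ come directly from Theorem \ref{LOCALEX}, now applicable on arbitrary time windows since $T_{\rm{max}} = \infty$, while the assertion $\mathbf{u}(t,\mathbf{x}) \in B_\Delta$ on $\overline{Q}_T$ is exactly Corollary \ref{CORIR}. The main obstacle has in fact already been surmounted in the preceding lemmas: the separate treatment of $u_2$ (requiring the scalar theory of Chapter III of \cite{OALVASNNU} coupled with the delicate $L^4$ and $L^{2s+4}$ gradient bounds of Proposition \ref{LCONGU21D} and Lemma \ref{LCONGU22D}, which is where conditions \eqref{COND1} and \eqref{COND2} are genuinely used) and the subsequent treatment of $u_1$ (whose equation behaves like a scalar quasilinear problem once $u_2$ is known to be H\"older continuous). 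The present theorem merely assembles those estimates and feeds them into the general compactness criterion of Section \ref{GT}.
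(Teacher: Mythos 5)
Your proposal is correct and follows essentially the same route as the paper: the paper's proof likewise just collects the $C^{\gamma_1}$ estimate for $u_2$ and the $C^{\gamma_2}$ estimate for $u_1$ from the preceding lemmas, feeds the resulting uniform H\"older bound into Corollary \ref{CORGS} to rule out $T_{\rm{max}}<\infty$, and cites Corollary \ref{CORIR} for $\mathbf{u}(t,\mathbf{x})\in B_\Delta$. Your write-up merely makes explicit the contradiction argument and the choice $\tilde{\gamma}=\min\{\gamma_1,\gamma_2\}$ that the paper leaves implicit.
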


\begin{proof}
First, we have from the previous results that,
$u_\al \in C^\gamma(\ol{Q}_t^\ve)$ $(\al=1,2)$ for some
$\gamma>0$. Therefore, applying Corollary \ref{CORGS} we obtain the
global classical solution $\mathbf{u}$ for the 
initial-boundary value problem \eqref{PSPZERO}--\eqref{BCAZERO},
with Dirichlet condition. Finally, $\mathbf{u}(t,\mathbf{x}) \in B_\Delta$, for each $(t,\mathbf{x}) \in \ol{Q}_T$, 
follows from Corollary  \ref{CORIR}.

\end{proof}

\subsubsection{ Flux condition } \label{SECFLUX}

Now, let us consider flux-boundary condition in \eqref{BCAZERO}, for $\alpha =1, 2$.
Analogously, we begin establishing estimates for $u_2$. Then, we have the following 

\begin{lemma}
\label{LCONU2F} Let $0<\ve< T_{\rm{max}}$ be fixed and consider for $\alpha= 2$ the
initial-boundary value problem \eqref{PSPZERO}--\eqref{BCAZERO} (flux condition), with $u_{0_2} \in E$.
Assume the conditions \eqref{COND1}, \eqref{COND2},
\eqref{AIJ21}, \eqref{IBCBD}--\eqref{FCB}. Then,  there exists
$\gamma_1 > 0$, such that
$$
  u_2 \in C^{2+\gamma_1,1+ \gamma_1/2}(Q^\ve_{T_{\rm{max}}}
  \cup \Gamma^\ve_{T_{\rm{max}}})
  \quad \text{and} \quad |u_2|_{\gamma_1}^{Q^\ve_{T_{\rm{max}}}} \leq
  C_2,
$$
where $C_2$ is independent of $t < T_{\rm{max}}$.
\end{lemma}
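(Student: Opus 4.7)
The plan is to mirror the proof of Lemma \ref{LCONU2D}, replacing the Dirichlet H\"older estimate by its conormal-boundary counterpart, and exploiting that the extra structural hypotheses \eqref{COND1} and \eqref{COND2} fully decouple the principal part of the second equation from $u_1$.

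First, I would revisit Lemma \ref{MainLemma} under zero-flux boundary conditions: multiplying \eqref{PSPZERO} (with $\al=2$) by $u_2$ and integrating over $\Omega$, the boundary integral produced by integration by parts vanishes identically, so no lifting is needed and we obtain directly $u_2 \in V^{1,0}_2(Q_{T_{\rm max}})$ with a bound independent of $t<T_{\rm max}$. Next, using \eqref{AIJ21}, \eqref{COND1}, \eqref{COND2}, the second equation decouples to
\begin{equation*}
  \partial_t u_2 = \divx\!\bigl( A_{22}^{jk}(\mathbf{x},u_2)\,\partial_{x_k} u_2 - \varphi_2^j(\mathbf{x},u_2) \bigr) + g_2(\mathbf{x},u_1,u_2),
\end{equation*}
subject to $\bigl[ A_{22}^{jk}(\mathbf{x},u_2)\,\partial_{x_k} u_2 - \varphi_2^j(\mathbf{x},u_2) \bigr] n^j = 0$ on $\Gamma_T$; the only surviving coupling with $u_1$ is through the uniformly bounded source $g_2$ (by Corollary \ref{CORIR}, $\mathbf{u} \in B_\Delta$).

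Freezing the principal coefficients $a_{jk}(t,\mathbf{x}):= A_{22}^{jk}(\mathbf{x},u_2(t,\mathbf{x}))$, which are uniformly elliptic by \eqref{REG1}, and treating $f^j:= \varphi_2^j(\mathbf{x},u_2)$ and $f:= g_2$ as bounded measurable data, I get a linear, uniformly parabolic equation for $u_2$ alone in divergence form, with bounded measurable coefficients, a bounded source, and a homogeneous conormal boundary condition (after absorbing $\varphi_2^j n^j$ into the conormal flux). At this point, the conormal-boundary version of Theorem 10.1, Chapter III of \cite{OALVASNNU} (the De Giorgi--Nash--Moser H\"older estimate for divergence-form parabolic equations with oblique boundary conditions) provides some $\gamma_1 \in (0,1)$ and the uniform estimate $|u_2|_{\gamma_1}^{Q_{T_{\rm max}}^\ve} \leq C_2$, with constants depending on $\ve$, $\la_0$, the $C^2$-norms of $A_{22}$ and $\varphi_2$, and $\|g_2\|_\infty$, but not on $T_{\rm max}$.

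Finally, a standard Schauder bootstrap --- using that, once $u_2$ is H\"older continuous, the composites $a_{jk}(t,\mathbf{x})$ and $f^j(t,\mathbf{x})$ inherit H\"older regularity --- upgrades the solution to $C^{2+\gamma_1,\,1+\gamma_1/2}(Q^\ve_{T_{\rm max}} \cup \Gamma^\ve_{T_{\rm max}})$ as claimed. The main obstacle is a technical one: casting the nonlinear boundary flux $\varphi_2^j(\mathbf{x},u_2)\,n^j$ in the exact form required by the LSU framework for the oblique-derivative H\"older estimate, and verifying that the resulting exponent $\gamma_1$ and constant $C_2$ are genuinely uniform in $t<T_{\rm max}$ --- a check that, unlike in the Dirichlet case, cannot simply quote the interior version of the cited theorem but must invoke its boundary variant with the flux condition built in.
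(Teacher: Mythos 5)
Your argument is correct in outline and coincides with the paper's proof on the interior step: the paper likewise invokes Theorem 10.1 of Chapter III (linear divergence-form theory) in \cite{OALVASNNU} with $a_{jk}=A_{22}^{jk}(\mathbf{x},u_2)$, $f_j=\vp_2^j(\mathbf{x},u_2)$, $f=g_2$, exploiting exactly the decoupling you describe via \eqref{AIJ21}, \eqref{COND1}, \eqref{COND2} and the uniform bound $\mathbf{u}\in B_\Delta$. Where you diverge is at the boundary, which you correctly identify as the crux. You propose to linearize and invoke a conormal-boundary version of the linear De Giorgi--Nash--Moser estimate, followed by a Schauder bootstrap; the paper instead goes directly to the \emph{quasilinear} conormal theory of \cite{OALVASNNU}, Chapter V, Section 7, verifying that the $u_2$-equation with the flux condition has the form (7.1)--(7.3) and satisfies hypotheses (7.4)--(7.6), so that Theorem 7.1 there delivers the boundary H\"older estimate (and the higher regularity) in one stroke. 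The two routes buy different things: yours stays entirely within the linear framework already used in the Dirichlet case (Lemma \ref{LCONU2D}) and makes the role of each structural hypothesis transparent, but it leaves open precisely the technical point you flag --- recasting the nonlinear boundary flux $\vp_2^j(\mathbf{x},u_2)\,n^j$ so that the linear oblique-derivative estimate applies with constants uniform in $t<T_{\rm{max}}$, and the subsequent bootstrap from $C^{\gamma_1}$ to $C^{2+\gamma_1}$ in divergence form actually requires an intermediate gradient estimate (which the paper obtains separately, in Lemma \ref{LCONU2F1}, from Nazarov--Uraltseva). The paper's appeal to the quasilinear Theorem 7.1 sidesteps that recasting because the conormal condition is built into the structure conditions of Chapter V, Section 7, at the cost of a less self-contained verification. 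Neither proof spells out the final passage to $C^{2+\gamma_1,1+\gamma_1/2}$ in detail, so on that point you are no less complete than the original.
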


\begin{proof}
1. First, the interior estimates in $C^{2+\gamma_1,1+ \gamma_1/2}({Q^\ve}'_{T_{\rm{max}}})$, 
$$\text{for any $ {Q^\ve}'_{T_{\rm{max}}} \subset \subset  {Q^\ve}_{T_{\rm{max}}}$},$$
follow applying Theorem 10.1 in Chapter III  (linear theory)  from
Ladyzenskaja, Solonikov and Ural'ceva \cite{OALVASNNU}, with
$$
  a_{jk}(t,\mathbf{x})=
  A_{22}^{jk}(\mathbf{x}, u_2(t,\mathbf{x})), \quad
  f_j(t,\mathbf{x})=
  \vp^j_2(\mathbf{x}, u_2(t,\mathbf{x})),
$$
also
$$
  f(t,\mathbf{x})=
  g_2(\mathbf{x},u_1(t,\mathbf{x}),u_2(t,\mathbf{x})),\quad a_j=
  b_j= a= 0.
$$
Since $\mathbf{u} \in B_\Delta$, and $f,f_j$ are uniformly
bounded, for each $d \geq 1$, there exist positive $q$ and $r$, such that
$$
  \frac{1}{r}+\frac{d}{2q}= 1 - \kappa_1, \quad \kappa_1 \in
  (0,1)
$$
and $\|f_j^2,f\|_{q,r,Q_{T_{\rm{max}}}}\leq C$, where $C$ is a
positive constant independent of $t < T_{\rm{max}}$.

\bigskip
2. Now, to derive the estimates closely to the boundary, we may apply the
non-linear theory from
Ladyzenskaja, Solonikov and Ural'ceva \cite{OALVASNNU} developed in Chapter V, Section 7.
More precisely, the equation for $u_2$ is in the same form of (7.1)-(7.3), and admits the conditions 
(7.4)-(7.6). Therefore, it follows
from Theorem 7.1 in that book,  the thesis of the lemma.
\end{proof}

\begin{lemma}
\label{LCONU2F1}  Under conditions of Lemma \ref{LCONU2F}, there exists $M_1> 0$, such that
$$
  \sup_{Q^\ve_{T_{\rm{max}}}} |\nabla_{\mathbf{x}} u_2(t,\mathbf{x})| \leq M_1.
$$
\end{lemma}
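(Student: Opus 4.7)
The plan is to view the $u_2$-equation as a scalar quasilinear parabolic equation that is decoupled from $u_1$ in its principal and first-order parts, and to upgrade the uniform Hölder estimate from Lemma \ref{LCONU2F} to a uniform gradient estimate by freezing coefficients and invoking the parabolic Schauder theory.

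Indeed, under \eqref{COND1} and \eqref{COND2}, the diffusion matrix $A_{22}^{jk}(\mathbf{x},u_2)$ and the convective flux $\varphi_2^j(\mathbf{x},u_2)$ depend only on $(\mathbf{x},u_2)$, while $g_2(\mathbf{x},\mathbf{u})$ is uniformly bounded on the invariant set $B_\Delta$ by Corollary \ref{CORIR}. The flux boundary condition for $\alpha=2$ likewise only involves $u_2$:
$$\bigl(A_{22}^{jk}(\mathbf{x},u_2)\,\partial_{x_k}u_2 - \varphi_2^j(\mathbf{x},u_2)\bigr)n^j = 0 \quad \text{on}\ \Gamma.$$
Setting $a^{jk}(t,\mathbf{x}):=A_{22}^{jk}(\mathbf{x},u_2(t,\mathbf{x}))$, $\phi^j(t,\mathbf{x}):=\varphi_2^j(\mathbf{x},u_2(t,\mathbf{x}))$, and $h(t,\mathbf{x}):=g_2(\mathbf{x},\mathbf{u}(t,\mathbf{x}))$, the uniform bound $|u_2|_{\gamma_1}^{Q^\ve_{T_{\rm{max}}}}\leq C_2$ from Lemma \ref{LCONU2F}, combined with the $C^2$-regularity of $A_{22}$, $\varphi_2$, $g_2$, yields uniform Hölder bounds on $a^{jk}$ and $\phi^j$ and a uniform $L^\infty$-bound on $h$, all independent of $t<T_{\rm{max}}$.

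Thus $u_2$ satisfies the uniformly parabolic, divergence-form linear equation
$$\partial_t u_2 = \partial_{x_j}\bigl(a^{jk}(t,\mathbf{x})\,\partial_{x_k}u_2 - \phi^j(t,\mathbf{x})\bigr) + h(t,\mathbf{x}),$$
with the oblique boundary condition of the same type. Applying the interior and boundary Schauder estimates for such linear problems from Ladyzenskaja--Solonnikov--Ural'ceva \cite{OALVASNNU} (Chapter IV, Section 5, together with the oblique-derivative version used in Chapter V, Section 7, as already exploited in the proof of Lemma \ref{LCONU2F}), I would obtain a uniform bound $|u_2|_{1+\gamma'}^{Q^\ve_{T_{\rm{max}}}\cup\Gamma^\ve_{T_{\rm{max}}}}\leq M$ with $M$ independent of $t<T_{\rm{max}}$. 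This strictly implies the desired bound $\sup_{Q^\ve_{T_{\rm{max}}}}|\nablax u_2|\leq M_1$.

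The main technical difficulty is the estimate up to the lateral boundary: the oblique condition itself involves the coefficient matrix $a^{jk}$, so the Schauder estimate must be invoked in its oblique-derivative form, with $\Gamma$ locally flattened by a change of variables and the resulting local estimates patched via a finite cover of $\overline{\Omega}$. The shift by $\ve>0$ removes the initial slice and avoids the separate compatibility issues at $t=0$; the uniformity of the final constant $M_1$ in $t<T_{\rm{max}}$ hinges precisely on the $t$-independence of the Hölder norms of $a^{jk}$, $\phi^j$, and $h$ supplied by Lemma \ref{LCONU2F}.
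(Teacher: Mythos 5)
Your argument is correct in substance but follows a genuinely different route from the paper. The paper disposes of this lemma in one line by invoking Theorem 3 of Nazarov--Uraltseva \cite{AINNNU}, i.e.\ an a priori \emph{gradient} estimate for quasilinear parabolic equations with an oblique (conormal) boundary condition, applied directly to the $u_2$-equation, which by \eqref{COND1}, \eqref{COND2} is a scalar equation in $u_2$ alone with bounded source $g_2$; it then quotes Theorem 4 of the same paper for the uniform $\|u_2\|_{1+\gamma}$ bound. You instead freeze the coefficients using the uniform parabolic H\"older bound of Lemma \ref{LCONU2F} and appeal to \emph{linear} Schauder theory. That works, but note two points. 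First, your argument genuinely needs the $C^{1+\gamma'}$ Schauder theory for \emph{divergence-form} equations with H\"older coefficients $a^{jk},\phi^j$ and merely bounded $h$, together with its conormal-boundary version; the non-divergence $C^{2+\gamma}$ Schauder estimates of \cite{OALVASNNU}, Chapter IV, \S 5, which you cite, do not apply, since expanding $\partial_{x_j}\phi^j$ produces the term $\partial_{u_2}\varphi_2^j\,\partial_{x_j}u_2$, which is exactly the quantity you are trying to bound and is not known to be H\"older (or even bounded) a priori. The correct linear statement (interior and conormal $C^{1+\gamma'}$ estimates for divergence-form parabolic operators) is standard but is found more readily in, e.g., Lieberman's treatment than in the sections of \cite{OALVASNNU} you name. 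Second, your route makes the uniform H\"older bound of Lemma \ref{LCONU2F} an essential input, whereas the Nazarov--Uraltseva gradient estimate used in the paper rests only on the $L^\infty$ bound from the invariant region and the structure conditions; what your approach buys in exchange is that it avoids the specialized quasilinear gradient-estimate machinery and reduces everything to linear theory.
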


\begin{proof}
The thesis of the Lemma follows directly from Theorem 3 in the paper of 
A.I. Nazarov, N.N. Uraltseva \cite{AINNNU}. Also, from Theorem 4 in that paper, 
there exists $M_{1+\gamma}> 0$, such that
$$
    \|  u_2(t,\mathbf{x})\|_{1+\gamma}^{Q^\ve_{T_{\rm{max}}}} \leq M_{1+\gamma}.
$$
\end{proof}

\medskip
Now, let us rewrite the second equation of our system, which is to say
$$
   \begin{aligned}
    \partial_t u_2 + \partial_{x_j} \vp_2^j &+ \partial_{u_2} \vp_2^j \partial_{x_j} u_2
    = A_{22}^{kj}(\mathbf{x}, u_2) \partial^2_{x_k x_j} u_2 
    \\
    &+ \partial_{u_2} \big(A_{22}^{kj}(\mathbf{x}, u_2)\big) \partial_{x_k} u_2 \; \partial_{x_j} u_2
    + \partial_{x_j}\big( A_{22}^{kj}(\mathbf{x}, u_2)\big) \partial_{x_k} u_2
    \\
    & + g_2(\mathbf{x},u_1,u_2), \qquad (t,\mathbf{x}) \in {Q^\ve_{T_{\rm{max}}}}.
    \end{aligned}
$$
Supplemented with the boundary condition 
$$
   \Big( A_{22}^{kj}(\mathbf{x}, u_2) \partial_{x_k} u_2 + \vp_2^{j}(\mathbf{x}, u_2) \Big) \cos(\mathbf{n},x_j)= 0,
   \qquad (t,\mathbf{x}) \in {\Gamma^\ve_{T_{\rm{max}}}}.
$$
Also we have $u_2(\ve,\cdot) \in C^{2+\gamma}(\ol{\Omega})$. From the above estimates, Lemma \ref{LCONU2F} and 
Lemma \ref{LCONU2F1}, we obtain 

$$
   | \partial_{u_2} \big(A_{22}^{kj}\big) \partial_{x_k} u_2 \; \partial_{x_j} u_2
    + \partial_{x_j}\big( A_{22}^{kj}\big) \partial_{x_k} u_2 + g_2 -  \partial_{x_j} \vp_2^j 
    - \partial_{u_2} \vp_2^j \partial_{x_j} u_2 |_0^ {Q^\ve_{T_{\rm{max}}}}    \leq M_2,
$$
and 
$$
  | A_{22}(\mathbf{x},u_2) ^\gamma|_0^ {Q^\ve_{T_{\rm{max}}}}    \leq M_3.
$$
Moreover, it is possibly to apply the $W^{1,2}_p$--estimates  for linear parabolic problem
with flux boundary conditions (V.A. Solonikov \cite{VAS}, Amann \cite{HA0} )
$$
   \|u_2\|_{W^{1,2}_p(Q^\ve_{T_{\rm{max}}})} \leq M_4
$$
for any $p >1$, where the positive constant $M_4$ depend on $p$.

\begin{lemma} 
Let $0<\ve< T_{\rm{max}}$ be fixed and consider for $\alpha= 1$ the
initial-boundary value problem \eqref{PSPZERO}--\eqref{BCAZERO} (flux condition), with $u_{0_1} \in E$.
Assume the conditions \eqref{COND1}, \eqref{COND2},
\eqref{AIJ21}, \eqref{IBCBD}--\eqref{FCB}. Then,  there exists $\gamma_2 >0$, such that
\begin{equation}
\label{ESTU1}
  |u_1|_{\gamma_2}^{Q_t^\ve} \leq C,
\end{equation}
where $C$ is a positive constant independent of $t$, with $t \in
(\ve,T_{\rm{max}})$. 
\end{lemma}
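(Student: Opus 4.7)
The plan is to mirror the Dirichlet-case argument by treating the first equation of \eqref{PSPZERO} as a linear divergence-form scalar parabolic equation for $u_1$, with coefficient $a_{jk}(t,\mathbf{x}) := A_{11}^{jk}(\mathbf{x},u_1,u_2)$ and forcing terms
\begin{equation*}
    f_j(t,\mathbf{x}) := A_{12}^{jk}(\mathbf{x},\mathbf{u}) \, \partial_{x_k} u_2 - \varphi_1^j(\mathbf{x},\mathbf{u}),
    \qquad
    f(t,\mathbf{x}) := g_1(\mathbf{x},\mathbf{u}),
\end{equation*}
supplemented with the conormal boundary condition $(a_{jk}\partial_{x_k} u_1 + f_j)\, n^j = 0$ on $\Gamma^\ve_{T_{\rm{max}}}$, which follows from the zero-flux condition together with \eqref{AIJ21} and \eqref{FCB}.

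First I would check uniform ellipticity and boundedness of $a_{jk}$, which both follow from \eqref{REG1} and the invariant-region bound $\mathbf{u}\in B_\Delta$ supplied by Corollary \ref{CORIR}. Next I would exploit the crucial estimates obtained just before the statement: Lemma \ref{LCONU2F1} gives $\sup_{Q^\ve_{T_{\rm{max}}}}|\nablax u_2|\leq M_1$, and the subsequent remarks yield $\|u_2\|_{W^{1,2}_p(Q^\ve_{T_{\rm{max}}})}\leq M_4$ for every $p>1$. In particular $f_j$ and $f$ are uniformly bounded on $Q^\ve_{T_{\rm{max}}}$, so for any exponents $q,r$ large enough one has $\|f_j^2,f\|_{q,r,Q^\ve_{T_{\rm{max}}}} \leq C$, independent of $t<T_{\rm{max}}$, and the Morrey-type scaling $\tfrac{1}{r}+\tfrac{d}{2q}=1-\kappa_1$, $\kappa_1\in(0,1)$, can be arranged.

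With these inputs, the H\"older estimate \eqref{ESTU1} reduces to a linear statement about divergence-form parabolic equations with bounded measurable coefficients, bounded right-hand sides, and conormal boundary data. Interior H\"older continuity is again delivered by Theorem 10.1 of Chapter III in Ladyzenskaja, Solonikov and Ural'ceva \cite{OALVASNNU}, exactly as in the Dirichlet case. The boundary behaviour, however, is the delicate point: the theorem quoted in the Dirichlet case handles only the Dirichlet datum, so here I would appeal instead to the H\"older-type estimates for the conormal problem established by Nazarov and Uraltseva \cite{AINNNU}, which apply precisely because $f_j$ has a bounded trace on $\Gamma^\ve_{T_{\rm{max}}}$ thanks to the $L^\infty$ control of $\nablax u_2$.

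The main obstacle, as in the Dirichlet case, is therefore not the interior regularity but the boundary estimate: one must ensure that the conormal datum generated by $f_j\, n^j$ is regular enough for the boundary H\"older theory to apply. The uniform bound $|\nablax u_2|\leq M_1$ from Lemma \ref{LCONU2F1} is what makes this step work, and it is exactly for this reason that the conditions \eqref{COND1} and \eqref{COND2} have been used to first upgrade the regularity of $u_2$. Once the linear H\"older estimate is in force, the constant $C$ in \eqref{ESTU1} is independent of $t\in(\ve,T_{\rm{max}})$ because every step above is, and the lemma follows.
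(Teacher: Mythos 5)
Your reduction of the first equation to a linear divergence-form scalar problem for $u_1$, with $a_{jk}=A_{11}^{jk}(\mathbf{x},\mathbf{u})$ and the coupling term $A_{12}^{jk}\partial_{x_k}u_2$ absorbed into $f_j$, is exactly the paper's setup, and the interior estimate via Theorem 10.1 of Chapter III in \cite{OALVASNNU}, fed by $\mathbf{u}\in B_\Delta$ and the uniform bound $|\nablax u_2|\le M_1$ from Lemma \ref{LCONU2F1}, matches the paper's step 1. You also correctly isolate the boundary estimate as the delicate point and the $L^\infty$ control of $\nablax u_2$ as the input that makes it tractable.

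The one place where your argument does not go through as written is the choice of reference for the boundary step. Nazarov--Uraltseva \cite{AINNNU} treat a \emph{non-divergence-form} quasilinear equation with an oblique-derivative condition, and their Theorems 3 and 4 are gradient and $C^{1+\gamma}$ estimates, not De Giorgi--Nash type boundary H\"older bounds for divergence-form equations with merely bounded $f_j$, $f$. To put the $u_1$-equation into their framework you would have to expand $\divx\big(A_{12}\nablax u_2\big)$, which produces $D^2_{\mathbf{x}}u_2$ in the zeroth-order term; that quantity is controlled only in $L^p$ (via the $W^{1,2}_p$ estimate preceding the lemma), not pointwise, so their structure conditions are not verified. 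Moreover, you would be invoking a gradient-estimate machinery for $u_1$ when no a priori Lipschitz bound on $u_1$ is available or needed. The paper instead keeps the divergence structure and derives the boundary estimate from Theorem 7.1 in Chapter V, Section 7 of \cite{OALVASNNU} (conormal boundary conditions for divergence-form quasilinear equations, hypotheses (7.4)--(7.6)), where the coupling term enters only through its $L^\infty$ norm, exactly what Lemma \ref{LCONU2F1} supplies. Replacing your citation by that result repairs the proof; everything else in your proposal is consistent with the paper's argument.
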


\begin{proof}
1. Again, the interior estimates  in $C^{\gamma_2}({Q^\ve}'_{T_{\rm{max}}})$, 
$( {Q^\ve}'_{T_{\rm{max}}} \subset \subset  {Q^\ve}_{T_{\rm{max}}})$, follow applying Theorem 10.1 (linear theory) in Chapter III from
Ladyzenskaja, Solonikov and Ural'ceva \cite{OALVASNNU}, with
$$
  \begin{aligned}
  a_{jk}(t,\mathbf{x})&=
  A_{11}^{jk}(\mathbf{x},u_1(t,\mathbf{x}),u_2(t,\mathbf{x})), \\[5pt]
  f_j(t,\mathbf{x})&=
  \vp^j_1(\mathbf{x},u_1(t,\mathbf{x}),u_2(t,\mathbf{x}))
  - A_{12}^{jk}(\mathbf{x},u_1(t,\mathbf{x}),u_2(t,\mathbf{x})) \, \partial_{x_k} u_2(t,\mathbf{x}),
 \end{aligned}
$$
also
$$
  f(t,\mathbf{x})=
  g_2(\mathbf{x},u_1(t,\mathbf{x}),u_2(t,\mathbf{x})),\quad a_j=
  b_j= a= 0.
$$
Since $\mathbf{u} \in B_\Delta$, and $f,f_j$ are uniformly
bounded, for each $d$, there exist positive $q$ and $r$, such that
$$
  \frac{1}{r}+\frac{d}{2q}= 1 - \kappa_1, \quad \kappa_1 \in
  (0,1)
$$
and $\|f_j^2,f\|_{q,r,Q_{T_{\rm{max}}}}\leq C$, where $C$ is a
positive constant independent of $t < T_{\rm{max}}$.

\bigskip
2. Let us observe that, the estimate closely 
to the boundary  does not follow in the same way to $u_1$, that is to say, it is not possible to apply the same strategy as done 
for $u_2$ in Lemma \ref{LCONU2F}. Therefore, we proceed to derive the estimates closely to the boundary, applying the
non-linear theory from
Ladyzenskaja, Solonikov and Ural'ceva \cite{OALVASNNU} developed in Section 7 of Chapter V.
More precisely, the equation for $u_1$ is in the same form of (7.1)-(7.3), and admits the conditions 
(7.4)-(7.6) from that book.  Then, from Theorem 7.1 in that book, it follows the thesis of the lemma.
\end{proof}

\begin{theorem} \label{FLUXTHM}
Let any $T>0$ be given and consider $\mathbf{u}_{0} \in \!E$.
Assume the conditions \eqref{COND1},
\eqref{COND2},
\eqref{AIJ21}, \eqref{IBCBD}--\eqref{FCB}. Then, 
the initial-boundary value problem \eqref{PSPZERO}--\eqref{BCAZERO},
with flux condition, has a unique global solution
$\mathbf{u} \in H_0^{2+\gamma}(\ol{Q}_T)$. Moreover, for each $(t,\mathbf{x}) \in \ol{Q}_T$, 
$\mathbf{u}(t,\mathbf{x}) \in B_\Delta$.
\end{theorem}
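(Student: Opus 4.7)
The plan is to combine the \emph{a priori} H\"older estimates established earlier in this subsection with the abstract global-existence criterion of Section \ref{GT}. Let $\mathbf{u}_0 \in E$ satisfy the standing assumptions, and let $\mathbf{u}(t,\mathbf{x};\mathbf{u}_0)$ denote the unique classical solution in $H_0^{2+\gamma}(\ol{Q}_{T_{\rm{max}}})$ provided by Theorem \ref{LOCALEX} on the maximal existence interval $[0,T_{\rm{max}})$. Arguing by contradiction, I assume that $T_{\rm{max}} < T$.

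First I invoke Lemma \ref{PIR} and Corollary \ref{CORIR} to conclude that $\mathbf{u}(t,\mathbf{x}) \in B_\Delta$ for every $(t,\mathbf{x}) \in [0,T_{\rm{max}}) \times \ol{\Omega}$, which already yields a uniform $L^\infty$ bound independent of time. Next, fix $\ve \in (0,T_{\rm{max}})$. Lemma \ref{LCONU2F} and Lemma \ref{LCONU2F1} produce the regularity $u_2 \in C^{2+\gamma_1,\,1+\gamma_1/2}(\ol{Q}^{\ve}_{T_{\rm{max}}})$ together with a uniform bound on $|\nablax u_2|$, both independent of $t<T_{\rm{max}}$. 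Feeding these bounds into the $u_1$-equation, which has the form to which Theorem 10.1 of Chapter III (and Theorem 7.1 of Chapter V near $\Gamma$) of Ladyzenskaja--Solonnikov--Ural'ceva applies, the final lemma of the subsection delivers $u_1 \in C^{\gamma_2}(\ol{Q}^{\ve}_{T_{\rm{max}}})$ with a bound independent of $t$. Setting $\gamma := \min(\gamma_1,\gamma_2) \in (0,1)$, the orbit $\{\mathbf{u}(t,\cdot) : t \in (\ve,T_{\rm{max}})\}$ is therefore bounded in $C^\gamma(\ol{\Omega})$.

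Corollary \ref{CORGS} now applies directly: a bounded set in $C^\gamma(\ol{\Omega})$ is pre-compact in $C(\ol{\Omega})$ by Arzel\`a--Ascoli, so the hypothesis of Theorem \ref{THMGES} is verified, and the solution can be continued beyond $T_{\rm{max}}$, contradicting the maximality. Hence $T_{\rm{max}} = \infty$ and $\mathbf{u} \in H_0^{2+\gamma}(\ol{Q}_T)$ for every $T>0$; uniqueness is already part of Theorem \ref{LOCALEX}, and the inclusion $\mathbf{u}(t,\mathbf{x}) \in B_\Delta$ on $\ol{Q}_T$ is inherited from Corollary \ref{CORIR}.

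The main potential obstacle is to ensure that every constant furnished by Lemmas \ref{LCONU2F}, \ref{LCONU2F1} and the $u_1$-lemma is genuinely \emph{independent} of the (a priori unknown) value $T_{\rm{max}}$, rather than merely finite on each compact subinterval of $[\ve,T_{\rm{max}})$. Unlike the Dirichlet case, where Lemma \ref{LCONGU21D} was needed to bootstrap an $L^4$-gradient estimate for $u_2$ before reaching the boundary, in the flux setting this uniformity is obtained directly from the nonlinear boundary theory of Chapter V in Ladyzenskaja--Solonnikov--Ural'ceva together with the gradient $L^\infty$-bound of Nazarov--Ural'tseva, both of which give constants depending only on the $L^\infty$-norm of $\mathbf{u}$ (already controlled by $B_\Delta$) and on the data. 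Once this uniformity is recorded, the reduction to Corollary \ref{CORGS} is essentially the same as in the proof of Theorem \ref{DIRICHLETTHM}.
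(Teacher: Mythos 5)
Your proposal is correct and follows essentially the same route as the paper: the paper's (very terse) proof likewise collects the uniform $C^\gamma(\ol{Q}^\ve_{T_{\rm{max}}})$ bounds for $u_2$ and $u_1$ from Lemmas \ref{LCONU2F}, \ref{LCONU2F1} and the subsequent $u_1$-lemma, and then invokes Corollary \ref{CORGS} (i.e.\ Theorem \ref{THMGES} via Arzel\`a--Ascoli) to rule out a finite $T_{\rm{max}}$, with the invariant-region property coming from Corollary \ref{CORIR}. Your additional remarks on the time-uniformity of the constants and on uniqueness simply make explicit what the paper leaves implicit.
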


\begin{proof}
First, we have from the above lemmas that,
$u_\al \in C^\gamma(\ol{Q}_t^\ve)$ $(\al=1,2)$ for some
$\gamma>0$. Therefore, from Corollary \ref{CORGS} we obtain the
global classical solution $\mathbf{u}$ for the problem
\eqref{PS}--\eqref{BC}.
\end{proof}

\begin{remark}
Clearly, it remains to consider Dirichlet boundary condition for $\alpha= 1$, flux-condition for $\alpha= 2$,
and vice-versa. These type of mixed boundary conditions follow from suitable consideration of the above
results.  
\end{remark}

\section*{Acknowledgements}

The authors were partially supported by FAPERJ through the grant
E-26/ 111.564/2011 entitled \textsl{"Fractured Porous Media"}.
Wladimir Neves is also partially supported by Pronex-FAPERJ
through the grant E-26/ 110.560/2010 entitled \textsl{"Nonlinear Partial
Differential Equations"}.


\end{document}